\numberwithin{equation}{section}
\newtheorem{theorem}{Theorem}[section]
\newtheorem{lemma}[theorem]{Lemma}
\newtheorem{definition}[theorem]{Definition}
\newtheorem{corollary}[theorem]{Corollary}
\newtheorem{proposition}[theorem]{Proposition}
\newtheorem{question}{Question}
\newtheorem{remark}[theorem]{Remark}
\title[An inverse problem for fractional $p\,$-Laplace equations]{
	Determining coefficients for a fractional $p$-Laplace equation from exterior measurements 
}
\author[M. Kar]{Manas Kar}
\address{Indian Institute of Science Education and Research (IISER) Bhopal, India}
\email{manas@iiserb.ac.in}
\author[Y.-H. Lin]{Yi-Hsuan Lin}
\address{Department of Applied Mathematics, National Yang Ming Chiao Tung University, Hsinchu, Taiwan}
\email{yihsuanlin3@gmail.com}
\author[P. Zimmermann]{Philipp Zimmermann}
\address{Department of Mathematics, ETH Zurich, Z\"urich, Switzerland}
\email{philipp.zimmermann@math.ethz.ch}
\newcommand{\R}{{\mathbb R}}
\newcommand{\N}{{\mathbb N}}
\newcommand{\eps}{\varepsilon}
\newcommand{\var}{\varphi}
\DeclareMathOperator{\DDiv}{div} 
\DeclareMathOperator{\Div}{Div} 
\DeclareMathOperator{\supp}{supp} 
\DeclareMathOperator{\dist}{dist} 
\newcommand{\oddvf}[1]{L^0\left(\bigwedge_{od}^1 #1 \right)} 
\begin{document}
	
	\maketitle
	\begin{abstract} 
		We consider an inverse problem of determining the coefficients of a fractional $p\,$-Laplace equation in the exterior domain. Assuming suitable local regularity of the coefficients in the exterior domain, we offer an explicit reconstruction formula in the region where the exterior measurements are performed. This formula is then used to establish a \emph{global uniqueness} result for real-analytic coefficents. In addition, we also derive a stability estimate for the unique determination of the coefficients in the exterior measurement set.
		
		\medskip
		
		\noindent{\bf Keywords.} Inverse problems, exterior determination, fractional gradient, fractional divergence, fractional $p\,$-Laplacian.
		
		\noindent{\bf Mathematics Subject Classification (2020)}: Primary 35R30; secondary 26A33, 42B37, 46F12

	\end{abstract}

	\tableofcontents
	
	\section{Introduction}
	
	In this article, we study an inverse problem for a fractional $p\,$-Laplace equation. To formulate the problem, let us consider a partial differential equation (PDE) of the form
	\begin{equation}
		\label{eq: fractional p laplace type equation}
		\Div_s(\sigma|d_s u|^{p-2}d_s u)=0,
	\end{equation}
	where $1<p<\infty$, $\sigma=\sigma(x,y)\colon\R^n\times\R^n\to\R$ satisfies the uniform ellipticity condition 
	\begin{equation}
		\label{eq: uniform ellipticity cond}
		\lambda \leq \sigma(x,y)\leq\lambda^{-1}, \text{ for all }x,y\in \R^n,
	\end{equation}
	and for some constant $\lambda>0$. Later we call this operator appearing in \eqref{eq: fractional p laplace type equation} \emph{weighted fractional $p\,$-Laplacian}. Here, $d_su$ denotes the \emph{fractional $s$-gradient} and $\Div_s$ is its adjoint, which is the \emph{fractional $s$-divergence}, with respect to the measure 
	$$
	d\mu=\frac{dxdy}{|x-y|^n} \text{ on } \R^{n} \times \R^{n}.
	$$
	More concretely, for any function $u\colon\R^n\to\R$ and $s\in (0,1)$, the fractional $s$-gradient and $s$-divergence could be defined by
	\begin{align*}
		d_su(x,y)=\frac{u(x)-u(y)}{|x-y|^s} \quad \text{ and }\quad 
		\left\langle \Div_s u,\varphi \right\rangle =\int_{\R^{2n}}\frac{u(x,y)d_s\varphi(x,y)}{|x-y|^n}\, dxdy,
	\end{align*}
	respectively, for $x,y\in\R^n$ and for all $\varphi\in C_c^{\infty}(\R^n)$ (see for example \cite{MazowieckaSchikorra2018} and Section \ref{sec: Preliminaries} for detailed definitions). The weighted fractional $p\,$-Laplacian is weakly defined by 
	\begin{align*}
		&\left\langle \Div_s(\sigma|d_su|^{p-2}d_su), \varphi \right\rangle\\
		:=&\int_{\R^{2n}}\sigma(x,y) |u(x)-u(y)|^{p-2}\frac{(u(x)-u(y))(\varphi(x)-\varphi(y))}{|x-y|^{n+sp}} \, dxdy,
	\end{align*}
	for any $\varphi \in C^\infty_c(\R^n)$.

	Next, we are going to formulate an inverse problem related to \eqref{eq: fractional p laplace type equation}. The observations of our inverse problem are encoded in the \emph{Dirichlet-to-Neumann} (DN) map, which is formally defined by
	\begin{equation}
		\label{eq: formal DN map}
		\Lambda_\sigma(f)=\left.\Div_s(\sigma|d_su|^{p-2}d_su)\right|_{ \Omega_e}
	\end{equation}
	where $\Omega_e:=\R^n\setminus\overline{\Omega}$ denotes the exterior domain and $u_f$ is the unique solution of
	\begin{equation}\label{eq: fractional-p-Lap equation}
		\begin{cases}
			\Div_s(\sigma|d_su|^{p-2}d_su)=0& \text{ in }\Omega,\\
			u=f\ &\text{ in } \Omega_e.
		\end{cases}
	\end{equation}
	Here we have assumed the well-posedness of \eqref{eq: fractional p laplace type equation} at the moment (the proof will be given in Section \ref{sec: The forward problem and DN map}) and that the weighted fractional $p\,$-Laplacian of $u$ induces at least a distribution on $\Omega_e$.
	Then we ask the following question:
	
	\begin{question}
		Let $W\subset \Omega_e$ be a given nonempty open set and assume that the coefficients $\sigma_1, \sigma_2$ satisfy $\left.\Lambda_{\sigma_1}f\right|_{W}=\left.\Lambda_{\sigma_2}f\right|_{W}$, for all $f\in C_c^{\infty}(W)$. If $\Sigma_j\colon\R^n\to\R$ is given by $\Sigma_j(x)=\sigma_j(x,x)$ for $j=1,2$, can we conclude $\Sigma_1=\Sigma_2$ in $W$?
	\end{question}
	
	In the special case of coefficients of the form $\sigma(x,y)=\gamma^{1/2}(x)\gamma^{1/2}(y)$ this generalizes recent results for the fractional conductivity equation (i.e. $p=2$) by the last author (see \cite{RZ2022unboundedFracCald,RZ2022FracCondCounter,CRZ2022global, RZ2022LowReg, StabilityFracCondEq} for the elliptic and \cite{LRZ22} for the parabolic case).  

	As $s=1$, the related inverse problem to the equation 
	\begin{equation}
		\label{eq: p Laplace type equations}
		\text{div}(\gamma|\nabla u|^{p-2}\nabla u)=0
	\end{equation} is the so called (classical) \emph{$p\,$-Calder\'on problem}, where $\gamma\colon\R^n\to\R$ is a positive scalar function\footnote{In this paper, we use the notation $\gamma=\gamma(x):\R^n \to \R$ and $\sigma=\sigma(x,y):\R^{2n}\to \R$.}, and we next discuss about it.
	\subsection{The \texorpdfstring{$p\,$}{p}-Calder\'on problem}
	
	In the $p\,$-Calder\'on problem the DN map is strongly given by 
	\[
	f\mapsto \Lambda_{\gamma}^p f=\left.\gamma|\nabla u_f|^{p-2}\partial_{\nu} u_f\right|_{\partial\Omega},
	\]
	where $u_f$ is the unique solution to
	\begin{equation}
		\label{eq: p Calderon problem summary}
		\begin{cases}
			\text{div}(\gamma|\nabla u|^{p-2}\nabla u)=0 & \text{ in }\Omega,\\
			u=f  & \text{ on }\partial\Omega.
		\end{cases}
	\end{equation}
	Now the inverse problem is to ask whether one can determine the coefficient $\gamma$ uniquely from the knowledge of the (nonlinear) DN map $\Lambda_{\gamma}^p$? Note that in the special case $p=2$ this reduces to the classical Calder\'on problem (see \cite{calderon2006inverse,KohnVogelius,SU87-CalderonProblem-annals}).
	
	Moreover, if $\gamma=1$ then the partial differential operator in \eqref{eq: p Calderon problem summary} becomes the $p\,$-Laplacian $\Delta_pu=\DDiv(|\nabla u|^{p-2}\nabla u)$, which appears in the study of nonlinear dielectrics \cite{Garroni:Kohn:2003, Talbot:Willis:1994:a, Talbot:Willis:1994:b, Kohn:Levy:1998}, plastic moulding \cite{Aronsson:1996}, nonlinear fluids \cite{Antontsev:Rodrigues:2006, Aronsson:Janfalk:1992, Glowinski:Rappaz:2003, Idiart:2008} and others. 
	In \cite{Salo:Zhong:2012} the authors proved by using $p\,$-harmonic functions (i.e. functions solving \eqref{eq: p Laplace type equations}) introduced by Wolff \cite{Wolff:2007} that the nonlinear DN map $\Lambda_{\gamma}^p$ determines uniquely $\gamma$ on the boundary. Later, Brander showed in \cite{Brander:2014} that the DN map also determines the normal derivative $\partial_{\nu}\gamma$ on $\partial\Omega$. These results can be seen as a zeroth and first order analogue of the boundary determination result of Kohn and Vogelius \cite{KohnVogelius} for the Calder\'on problem. Since not all coefficients of the Taylor series around a boundary point, as in the classical Calder\'on problem, are known from the DN map $\Lambda_{\gamma}^p$, it cannot be used to determine real-analytic coefficients in the interior of $\Omega$. Meanwhile, the authors \cite{brander2018monotonicity,Guo-Kar-Salo, Brander:Kar:Salo:2014, Kar-Wang, Brander-Ilmavirta-Kar} studied inverse problems for (weighted) $p\,$-Laplace equations by utilizing monotonicity methods.

	\subsection{Nonlocal inverse problems}
	
	In recent years, many different Calder\'on type inverse problems for nonlocal operators has been studied. The prototypical example is the inverse problem for the fractional Schr\"odinger operator $(-\Delta)^{s}+q$ with $q\in L^{\infty}(\Omega)$, which was first considered in \cite{GSU20} and initiated many of the later developments. The main ingredients in solving this Calder\'on problem is an \emph{Alessandrini identity}, the \emph{UCP} (unique continuation property) and the closely related \emph{Runge approximation}. It is worth noticing that the UCP and the approximation are much stronger than in the local case, which is mainly possible, because solutions to $(-\Delta)^s+q$ are much less rigid than the ones to the local Schr\"odinger equation $-\Delta+q$. By using a similar approach, one can solve a variety of inverse problems for nonlocal operators whose corresponding local counterpart is still open. For further details we refer to the works \cite{bhattacharyya2021inverse,CMR20,CMRU20,GLX,CL2019determining,CLL2017simultaneously,CRZ2022global,cekic2020calderon,feizmohammadi2021fractional,harrach2017nonlocal-monotonicity,harrach2020monotonicity,GRSU18,GU2021calder,ghosh2021non,lin2020monotonicity,LL2020inverse,LL2022inverse,LLR2019calder,LLU2022calder,KLW2021calder,RS17,ruland2018exponential,RZ2022FracCondCounter,RZ2022LowReg,RZ2022unboundedFracCald}. We point out that most of these works consider nonlocal inverse problems in which one recovers lower order coefficients instead of principal order like in the classical Calder\'on problem. On the other hand, in the articles \cite{GU2021calder,LLU2022calder,RZ2022LowReg,RZ2022unboundedFracCald,RZ2022FracCondCounter} the authors study nonlocal inverse problems where one is interested in determining leading order coefficients and hence they can be seen as full nonlocal analogues of the classical Calder\'on problem.
	
	Let us mention that in all the previous inverse problems the leading order operator of the underlying nonlocal PDEs is linear. A first step into the direction of considering nonlinear nonlocal leading order operators was taken in the work \cite{KRZ2022Biharm} by the first and the last author. A crucial advantage of the operators $L$ studied in this work is that they have the UCP, that is, if $u\colon\R^n\to\R$ is a sufficiently regular function and $Lu=u=0$ in an open set $V\subset\R^n$, then $u\equiv 0$ in $\R^n$. But in contrast the UCP for the operators in \eqref{eq: p Laplace type equations} is only known to hold in $n=2$ dimensions but for $n\geq 3$ it is a difficult open problem. Similarly, it is not known whether the operators in \eqref{eq: fractional p laplace type equation} have the UCP.
	
	\subsection{Main results}
	
	The main theorem of this article is the following exterior reconstruction result on the diagonal extending \cite[Proposition~1.5]{CRZ2022global}.
	
	\begin{theorem}[Exterior reconstruction on the diagonal]
		\label{main theorem}
		Let $\Omega \subset \mathbb{R}^n$ be a bounded open set,  $W\subset \Omega_e$ a nonempty open set, $0<s<1$, $1< p<\infty$ and $x_0\in W\Subset \Omega_e$. Then there exists a sequence $(\Phi_N)_{N\in\N}\subset C_c^{\infty}(W)$ such that
		\begin{enumerate}[(i)]
			\item for all $N \in \mathbb{N}$ it holds $[\Phi_N]_{W^{s,p}\left(\mathbb{R}^n\right)}=1$,
			\item
			for all $0\leq t<s$ there holds $\|\Phi_N\|_{W^{t,p}(\R^n)}\to 0$ as $N\to\infty$
			\item
			and $\supp\left(\Phi_N\right) \rightarrow\left\{x_0\right\}$ as $N \rightarrow \infty$.
		\end{enumerate}
		Moreover, if $\sigma\colon \R^n\times \R^n\to \R$ satisfies uniformly elliptic condition  \eqref{eq: uniform ellipticity cond} such that $(\sigma(x,\cdot)\colon W\to \R)_{x\in\R^n}$ is equicontinuous at $x_0$ and $\sigma(\cdot,x_0)\in C(W)$, then there holds 
		\begin{equation}
			\label{eq: convergence on diagonal}
			\sigma(x_0,x_0)=\lim_{N\to\infty}\langle \Lambda_{\sigma}\Phi_N,\Phi_N\rangle.
		\end{equation}
	\end{theorem}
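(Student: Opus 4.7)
The plan is to build $(\Phi_N)$ by concentration-rescaling of a fixed cutoff, and then to reduce $\langle\Lambda_\sigma\Phi_N,\Phi_N\rangle$ to a Dirichlet-type energy that localises to the diagonal at $x_0$ by virtue of the continuity assumption on $\sigma$. First I would fix $\phi\in C_c^\infty(B(0,1))$ nontrivial and a sequence $r_N\downarrow 0$ with $B(x_0,r_N)\subset W$, and set
\[
\Phi_N(x):=r_N^{-(n-sp)/p}\,[\phi]_{W^{s,p}(\R^n)}^{-1}\,\phi\!\left(\tfrac{x-x_0}{r_N}\right).
\]
The elementary scaling $[\phi(\,\cdot/r)]_{W^{\tau,p}(\R^n)}=r^{(n-\tau p)/p}[\phi]_{W^{\tau,p}(\R^n)}$ together with $\|\phi(\,\cdot/r)\|_{L^p}=r^{n/p}\|\phi\|_{L^p}$ immediately gives (i), implies $\|\Phi_N\|_{W^{t,p}(\R^n)}\to 0$ for $0\le t<s$, which is (ii), and $\supp\Phi_N\subset B(x_0,r_N)$ gives (iii).

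Next, I would invoke the well-posedness of \eqref{eq: fractional-p-Lap equation} established in Section~\ref{sec: The forward problem and DN map} to produce the unique solution $u_N$ with exterior data $\Phi_N$, and use $u_N$ itself as an extension in the Alessandrini-type identity to obtain
\[
\langle\Lambda_\sigma\Phi_N,\Phi_N\rangle = E_\sigma[u_N],\qquad E_\sigma[v]:=\int_{\R^{2n}}\sigma(x,y)\,\frac{|v(x)-v(y)|^p}{|x-y|^{n+sp}}\,dxdy.
\]
Since $u_N$ minimizes $E_\sigma[\,\cdot\,]$ among admissible extensions with exterior data $\Phi_N$, comparison with the trivial extension (which equals $\Phi_N$ on all of $\R^n$ because $\supp\Phi_N\Subset\Omega_e$) yields the upper bound $E_\sigma[u_N]\le E_\sigma[\Phi_N]$. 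To estimate $E_\sigma[\Phi_N]$, I would fix $\varepsilon>0$ and combine equicontinuity of $\sigma(x,\cdot)$ at $x_0$ with $\sigma(\cdot,x_0)\in C(W)$ to choose $\delta>0$ with $|\sigma(x,y)-\sigma(x_0,x_0)|<\varepsilon$ on $B(x_0,\delta)^2$ and $B(x_0,\delta)\Subset W$; then split the energy into the near-diagonal part over $B(x_0,\delta)^2$, bounded by $(\sigma(x_0,x_0)+\varepsilon)[\Phi_N]_{W^{s,p}}^p=\sigma(x_0,x_0)+\varepsilon$, and a tail (where $\Phi_N$ vanishes on one side once $r_N<\delta$) controlled by $C\lambda^{-1}\delta^{-sp}\|\Phi_N\|_{L^p}^p\to 0$ by (ii).

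For the matching lower bound, property (iii) gives, for $N$ large, $\supp\Phi_N\subset B(x_0,\delta)\subset W\subset\Omega_e$, so $u_N\equiv\Phi_N$ on $B(x_0,\delta)$. Restricting $E_\sigma[u_N]$ to $B(x_0,\delta)^2$ and using $\sigma\ge\sigma(x_0,x_0)-\varepsilon$ there,
\[
E_\sigma[u_N]\ge(\sigma(x_0,x_0)-\varepsilon)\int_{B(x_0,\delta)^2}\frac{|\Phi_N(x)-\Phi_N(y)|^p}{|x-y|^{n+sp}}\,dxdy,
\]
and the truncated seminorm differs from $[\Phi_N]_{W^{s,p}}^p=1$ by at most $O(\delta^{-sp}\|\Phi_N\|_{L^p}^p)\to 0$ by (ii). Sending $N\to\infty$ and then $\varepsilon\to 0$ closes the proof. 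The main technical difficulty is precisely that the critical quantity $[\Phi_N]_{W^{s,p}}$ is normalised to $1$ and therefore yields no smallness; the whole argument hinges on the complementary fact $\|\Phi_N\|_{W^{t,p}}\to 0$ for $t<s$, which is exactly what absorbs the off-diagonal and tail terms created when the weak continuity hypothesis on $\sigma$ is used to localise the energy around $x_0$.
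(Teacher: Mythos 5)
Your proposal is correct and takes a genuinely different, more economical route than the paper. The construction of $(\Phi_N)$ is essentially the same (rescaling a fixed bump; the paper uses a tensor product of one-dimensional cutoffs in a cube, you use a single bump in a ball, but the scaling computations are identical), and the localisation of the energy $E_\sigma[\Phi_N]$ to the diagonal at $(x_0,x_0)$ is also essentially the same kind of argument, though the paper uses a more elaborate three-term telescoping decomposition of $\sigma(x,y)$ together with cutoffs $\eta_M$, whereas you just extract a single $\delta$ with $|\sigma(x,y)-\sigma(x_0,x_0)|<\varepsilon$ on $B(x_0,\delta)^2$ and treat the off-diagonal tail using $\|\Phi_N\|_{L^p}\to 0$.

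The real divergence is in how you transfer from $E_\sigma[\Phi_N]$ to $\langle\Lambda_\sigma\Phi_N,\Phi_N\rangle=E_\sigma[u_N]$. The paper does this by first proving a quantitative stability lemma $\|u_N-\Phi_N\|_{W^{s,p}(\R^n)}\to 0$ (its Lemma~\ref{Correction_term_estimate}, whose proof runs through the strong monotonicity inequalities of Lemma~\ref{lemma: Auxiliary lemma} and requires separate treatment of the cases $p\ge 2$ and $1<p<2$), and then bounding the correction term $\int\sigma(|d_su_N|^{p-2}d_su_N-|d_s\Phi_N|^{p-2}d_s\Phi_N)\,d_s\Phi_N$ via estimate \eqref{eq: estimate mikko} (Proposition~\ref{Prop_correction_term}). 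You bypass both steps entirely with a squeeze: the minimality of $u_N$ immediately gives the upper bound $E_\sigma[u_N]\le E_\sigma[\Phi_N]$, while the fact that $u_N\equiv\Phi_N$ on $B(x_0,\delta)\subset\Omega_e$ means the near-diagonal part of $E_\sigma[u_N]$ coincides exactly with that of $E_\sigma[\Phi_N]$, giving the matching lower bound $(\sigma(x_0,x_0)-\varepsilon)(1-o(1))$ after throwing away the nonnegative remainder. This is cleaner and shorter, needs no case distinction in $p$, and — as a bonus — only requires the continuity hypotheses on $\sigma$ near $(x_0,x_0)$ rather than uniformly in the first variable over all of $\R^n$. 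What the paper's route buys, on the other hand, is the intermediate fact $\|u_N-\Phi_N\|_{W^{s,p}}\to 0$, which is not needed for the theorem but may be of independent interest as a convergence/stability statement for the solution operator.
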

	
	As an immediate consequence of the formula \eqref{eq: convergence on diagonal}, we obtain the following results on exterior determination, exterior stability and global uniqueness for real-analytic coefficients.
	
	\begin{proposition}[Exterior determination on the diagonal]
		\label{prop: exterior determination}
		Let $\Omega \subset \mathbb{R}^n$ be a bounded open set,  $W\subset \Omega_e$ be a nonempty open set, $0<s<1$ and $1< p<\infty$. Assume that $\sigma_j\colon \R^n\times \R^n\to \R$ satisfies the conditions of Theorem~\ref{main theorem} for $j=1,2$ and set $\Sigma_j(x):=\sigma_j(x,x)$ for $x\in \R^n$, $j=1,2$. Suppose $\left.\Lambda_{\sigma_1}f\right|_W=\left.\Lambda_{\sigma_2}f\right|_W$, for all $f\in C_c^{\infty}(W)$, then there holds $\Sigma_1(x)=\Sigma_2(x)$ for all $x\in W$.
	\end{proposition}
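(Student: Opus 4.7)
The proof is an immediate application of Theorem~\ref{main theorem}. The plan is to fix an arbitrary point $x_0 \in W$ and to observe that the construction of the sequence $(\Phi_N)_{N \in \N} \subset C_c^\infty(W)$ in that theorem depends only on $s$, $p$, $W$, and $x_0$, not on the coefficient itself. Therefore the same sequence $(\Phi_N)$ can be used simultaneously for both $\sigma_1$ and $\sigma_2$, and since both coefficients satisfy the equicontinuity and continuity hypotheses of Theorem~\ref{main theorem} by assumption, the reconstruction formula yields
\begin{equation*}
\Sigma_j(x_0) \;=\; \sigma_j(x_0,x_0) \;=\; \lim_{N\to\infty} \langle \Lambda_{\sigma_j}\Phi_N, \Phi_N\rangle, \qquad j=1,2.
\end{equation*}

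Next, I would exploit the support condition $\supp(\Phi_N) \subset W$ to show that the duality pairing $\langle \Lambda_{\sigma_j}\Phi_N, \Phi_N\rangle$ depends only on the restriction $\Lambda_{\sigma_j}\Phi_N|_W$. Combined with the hypothesis $\Lambda_{\sigma_1}f|_W = \Lambda_{\sigma_2}f|_W$ for every $f \in C_c^\infty(W)$, applied to $f = \Phi_N$, this delivers the equality of pairings
\begin{equation*}
\langle \Lambda_{\sigma_1}\Phi_N, \Phi_N\rangle \;=\; \langle \Lambda_{\sigma_2}\Phi_N, \Phi_N\rangle \qquad \text{for every } N \in \N.
\end{equation*}
Passing to the limit $N\to\infty$ in the two reconstruction identities then forces $\Sigma_1(x_0) = \Sigma_2(x_0)$, and since $x_0 \in W$ was arbitrary, the pointwise equality $\Sigma_1 = \Sigma_2$ on $W$ follows.

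I do not anticipate any serious obstacle beyond Theorem~\ref{main theorem} itself; the only bookkeeping step is to verify that the distributional pairing appearing in the reconstruction formula really is determined by the restriction of $\Lambda_{\sigma_j}\Phi_N$ to $W$, which is immediate from the compact support of the test function $\Phi_N$ in $W$ and the fact that $\Lambda_{\sigma}$ will be set up in the forward-problem section as a map producing a distribution on $\Omega_e$ that is legitimately tested against $C_c^\infty(W)$ functions.
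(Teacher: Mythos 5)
Your proposal follows essentially the same route as the paper's proof: fix $x_0\in W$, observe that the sequence $(\Phi_N)$ produced by Theorem~\ref{main theorem} is independent of the coefficient, use the hypothesis $\Lambda_{\sigma_1}f|_W=\Lambda_{\sigma_2}f|_W$ applied to $f=\Phi_N$ to equate the two quadratic forms, and pass to the limit in the reconstruction formula.

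There is one small technical point you should not skip. Theorem~\ref{main theorem} requires $x_0\in W\Subset\Omega_e$, i.e.\ the measurement set must be \emph{compactly} contained in the exterior, whereas Proposition~\ref{prop: exterior determination} only assumes $W\subset\Omega_e$ is open. Applying Theorem~\ref{main theorem} directly to the given $W$ is therefore not, in general, justified. The paper's proof fixes this by choosing an open neighborhood $V$ of $x_0$ with $V\Subset\Omega_e$ and $V\subset W$, then invoking Theorem~\ref{main theorem} with $V$ in place of $W$, which produces $(\Phi_N)\subset C_c^{\infty}(V)\subset C_c^{\infty}(W)$ so that the DN-map hypothesis still applies. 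This is a one-line repair, and everything else in your argument — in particular the observation that the pairing $\langle\Lambda_{\sigma_j}\Phi_N,\Phi_N\rangle$ is determined by $\Lambda_{\sigma_j}\Phi_N|_W$ because $\supp(\Phi_N)\subset W$ — is correct and exactly what the paper does.
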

	
	\begin{proposition}[Exterior stability on the diagonal]
		\label{prop: exterior stability}
		Let $\Omega \subset \mathbb{R}^n$ be a bounded open set,  $W\subset \Omega_e$ a nonempty open set, $0<s<1$ and $1< p<\infty$. Assume that $\sigma_j\colon \R^n\times \R^n\to \R$ satisfies the conditions of Theorem~\ref{main theorem} for $j=1,2$ and set $\Sigma_j(x)=\sigma_j(x,x)$ for $x\in \R^n$, $j=1,2$. Then we have 
		\[
		\|\Sigma_1-\Sigma_2\|_{L^{\infty}(W)}\leq \|\Lambda_{\sigma_1}-\Lambda_{\sigma_2}\|_{\widetilde{W}^{s,p}(W)\to (\widetilde{W}^{s,p}(W))^*}.
		\]
	\end{proposition}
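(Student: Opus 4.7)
The plan is to combine the pointwise exterior reconstruction formula from Theorem~\ref{main theorem} with a duality argument. Fix $x_0 \in W$ arbitrary and note that the approximating sequence in Theorem~\ref{main theorem} depends only on $x_0$, $W$, $s$ and $p$, not on $\sigma$. Hence the \emph{same} sequence $(\Phi_N)_{N \in \N} \subset C_c^\infty(W)$ reconstructs both $\Sigma_1(x_0)$ and $\Sigma_2(x_0)$, and subtracting the two instances of \eqref{eq: convergence on diagonal} gives
\begin{equation*}
    \Sigma_1(x_0) - \Sigma_2(x_0) = \lim_{N \to \infty} \langle (\Lambda_{\sigma_1} - \Lambda_{\sigma_2})\Phi_N, \Phi_N \rangle.
\end{equation*}

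Next, I would estimate the pairing on the right by the operator norm in the statement. By duality between $\widetilde{W}^{s,p}(W)$ and its dual,
\begin{equation*}
    \left|\langle (\Lambda_{\sigma_1} - \Lambda_{\sigma_2})\Phi_N, \Phi_N \rangle\right| \leq \|(\Lambda_{\sigma_1} - \Lambda_{\sigma_2}) \Phi_N\|_{(\widetilde{W}^{s,p}(W))^*} \, \|\Phi_N\|_{\widetilde{W}^{s,p}(W)}.
\end{equation*}
By properties (i) and (ii) in Theorem~\ref{main theorem} one has $[\Phi_N]_{W^{s,p}(\R^n)} = 1$ and $\|\Phi_N\|_{L^p(\R^n)} \to 0$, so $\|\Phi_N\|_{W^{s,p}(\R^n)} \to 1$. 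Using the $(p-1)$-homogeneity of each $\Lambda_{\sigma_j}$ (inherited directly from the weighted fractional $p$-Laplacian via testing in the weak formulation) and the associated natural normalization of the nonlinear operator norm, the right-hand side is bounded by $\|\Lambda_{\sigma_1} - \Lambda_{\sigma_2}\|_{\widetilde{W}^{s,p}(W) \to (\widetilde{W}^{s,p}(W))^*} \cdot \|\Phi_N\|_{\widetilde{W}^{s,p}(W)}^{p}$. Passing to the limit $N\to\infty$ then yields
\begin{equation*}
    |\Sigma_1(x_0) - \Sigma_2(x_0)| \leq \|\Lambda_{\sigma_1} - \Lambda_{\sigma_2}\|_{\widetilde{W}^{s,p}(W) \to (\widetilde{W}^{s,p}(W))^*},
\end{equation*}
and since $x_0 \in W$ was arbitrary and the right-hand side is independent of $x_0$, taking the supremum over $W$ finishes the proof.

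The only subtle point is the bookkeeping of the correct scaling exponent: because $\Lambda_\sigma$ is nonlinear, the operator norm on the right-hand side must be interpreted with the natural $(p-1)$-homogeneous normalization so that the bound $|\langle \Lambda_\sigma f, f \rangle| \leq \|\Lambda_\sigma\| \, \|f\|_{\widetilde{W}^{s,p}(W)}^p$ is available and survives the limit. Apart from this, the proof is an immediate consequence of the reconstruction identity and H\"older-type duality; all of the analytic work has been absorbed into Theorem~\ref{main theorem}.
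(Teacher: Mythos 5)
Your argument is essentially the paper's: reconstruct $\Sigma_j(x_0)$ via the same test sequence $(\Phi_N)$ (which indeed depends only on $x_0$, the exterior set, $s$ and $p$), subtract, estimate the pairing by duality, and pass to the limit using $[\Phi_N]_{W^{s,p}(\R^n)}=1$ and $\|\Phi_N\|_{L^p(\R^n)}\to 0$ so that $\|\Phi_N\|_{W^{s,p}(\R^n)}\to 1$. Your remark on the $(p-1)$-homogeneous normalization of the nonlinear operator norm, so that the bound reads $|\langle (\Lambda_{\sigma_1}-\Lambda_{\sigma_2})f,f\rangle|\leq \|\Lambda_{\sigma_1}-\Lambda_{\sigma_2}\|\,\|f\|_{W^{s,p}(\R^n)}^p$, is a genuinely more careful bookkeeping than the displayed computation in the paper, and it is consistent with the mapping estimate $\|\Lambda_\sigma(f)\|\lesssim\|f\|^{p-1}$ of Proposition~\ref{prop: DN maps}; since $\|\Phi_N\|_{W^{s,p}(\R^n)}\to 1$ both normalizations give the same limit.

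There is one small gap you should close. Theorem~\ref{main theorem} requires $x_0\in W\Subset\Omega_e$, i.e. the test set must be \emph{compactly} contained in $\Omega_e$, whereas Proposition~\ref{prop: exterior stability} only assumes $W\subset\Omega_e$ is open and nonempty (so $W$ could be unbounded or could have closure meeting $\partial\Omega$). You therefore cannot apply Theorem~\ref{main theorem} directly with the set $W$. The fix is exactly what the paper does: for each $x_0\in W$ choose an open neighborhood $V$ with $x_0\in V\subset W$ and $V\Subset\Omega_e$, run your argument with $V$ in place of $W$ to obtain
\[
|\Sigma_1(x_0)-\Sigma_2(x_0)|\leq \|\Lambda_{\sigma_1}-\Lambda_{\sigma_2}\|_{\widetilde{W}^{s,p}(V)\to (\widetilde{W}^{s,p}(V))^*},
\]
and then use that $\widetilde{W}^{s,p}(V)\subset\widetilde{W}^{s,p}(W)$ (with the same ambient norm) to bound this by the operator norm over $W$, after which the supremum over $x_0\in W$ finishes the proof. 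With that adjustment your proof is complete and matches the paper's.
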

	
	\begin{proposition}[Global uniqueness on the diagonal]
		\label{prop: uniqueness}
		Let $\Omega \subset \mathbb{R}^n$ be a bounded open set,  $W\subset \Omega_e$ a nonempty open set, $0<s<1$ and $1< p<\infty$. Assume that $\sigma_j\colon \R^n\times \R^n\to \R$ satisfies the conditions of Theorem~\ref{main theorem} for $j=1,2$ and set $\Sigma_j(x)=\sigma_j(x,x)$ for $x\in \R^n$, $j=1,2$. If $\left.\Lambda_{\sigma_1}f\right|_W=\left.\Lambda_{\sigma_2}f\right|_W$ for all $f\in C_c^{\infty}(W)$, and $\Sigma_j$ are real-analytic for $j=1,2$ then $\Sigma_1=\Sigma_2$ in $\R^n$. 
	\end{proposition}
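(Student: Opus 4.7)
The plan is to reduce the global uniqueness statement to the exterior determination result (Proposition~\ref{prop: exterior determination}) together with the classical identity theorem for real-analytic functions. Since all the heavy PDE/nonlocal analysis is already packaged in Theorem~\ref{main theorem} and in its corollary Proposition~\ref{prop: exterior determination}, there should be essentially nothing new to prove apart from an analytic-continuation argument.

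First, I would invoke Proposition~\ref{prop: exterior determination} directly. The hypotheses on $\sigma_1, \sigma_2$ required by Theorem~\ref{main theorem} are assumed, and by assumption $\Lambda_{\sigma_1} f|_W = \Lambda_{\sigma_2} f|_W$ for all $f \in C_c^\infty(W)$. Proposition~\ref{prop: exterior determination} then yields
\[
\Sigma_1(x) = \Sigma_2(x) \quad \text{for all } x \in W.
\]
In other words, the real-analytic function $F := \Sigma_1 - \Sigma_2$ on $\mathbb{R}^n$ vanishes identically on the nonempty open set $W$.

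Next, I would apply the identity theorem for real-analytic functions on the connected open set $\mathbb{R}^n$. Since $F$ is real-analytic on $\mathbb{R}^n$ (the difference of two real-analytic functions) and $\{x \in \mathbb{R}^n : F \equiv 0 \text{ in a neighborhood of } x\}$ is nonempty (it contains $W$), open (by definition), and closed (because all partial derivatives of $F$ must vanish there and real-analyticity forces $F$ to vanish in a neighborhood of any such accumulation point), it must coincide with all of $\mathbb{R}^n$. Therefore $F \equiv 0$ on $\mathbb{R}^n$, i.e., $\Sigma_1 = \Sigma_2$ in $\mathbb{R}^n$.

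There is really no main obstacle here: once Proposition~\ref{prop: exterior determination} is available, the proof is a one-line application of analytic continuation. The only minor point to double-check is that the connectedness of $\mathbb{R}^n$ is what allows the propagation from $W \subset \Omega_e$ not only throughout $\Omega_e$ but also across $\partial \Omega$ into the interior of $\Omega$, which is exactly where the inverse problem acquires its \emph{global} character. Since $\mathbb{R}^n$ is connected and the hypothesis of real-analyticity is assumed on all of $\mathbb{R}^n$, this is automatic, and no further work is needed.
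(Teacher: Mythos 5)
Your proof is correct and is essentially identical to the paper's: invoke Proposition~\ref{prop: exterior determination} to conclude $\Sigma_1=\Sigma_2$ on $W$, then apply analytic continuation (the identity theorem on the connected set $\R^n$) to extend the equality to all of $\R^n$. The paper merely compresses the identity-theorem step into "this immediately implies," while you spell out the standard open-closed argument.
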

	
	Observe that Proposition~\ref{prop: uniqueness} implies several global uniqueness results when one assumes that the coefficients have a product structure. For example, if for $j=1,2$ the coefficients $\sigma_j(x,y)$ can be written as $\sigma_j(x,y)=F(\gamma_j(x))F(\gamma_j(y))$ for some real analytic functions $\gamma_j\colon\R^n\to\R$, $F\colon\R_+\to\R_+$ satisfying
	\begin{enumerate}[(i)] 
		\item $\gamma_j$ is uniformly elliptic for $j=1,2$,
		\item $F$ is injective
		\item and for any compact interval $[a,b]\subset \R_+$ there exists $c>0$ such that $F(\xi)\geq c$ for all $\xi\in [a,b]$.
	\end{enumerate}
	Then $\left.\Lambda_{\sigma_1}f\right|_W=\left.\Lambda_{\sigma_2}f\right|_W$ for all $f\in C_c^{\infty}(W)$ implies $\gamma_1=\gamma_2$ in $\R^n$. As a special case one could take $F(t)=\sqrt{t}$ and recovers the global uniqueness result in \cite[Theorem~1.3]{CRZ2022global} for real-analytic conductivities.

	\subsection{Organization of the article}
	
	We first recall preliminaries related to the function spaces and nonlocal operators used throughout this work in Section~\ref{sec: Preliminaries}. Afterwards in Section~\ref{sec: The forward problem and DN map} we establish well-posedness of the exterior value problem for \eqref{eq: fractional p laplace type equation} and introduce the related DN map. The proof of the main result, Theorem~\ref{main theorem}, are divided into several steps for better readability and given in Section~\ref{sec: exterior reconstruction}. The proofs of Proposition~\ref{prop: exterior determination}, \ref{prop: exterior stability} and \ref{prop: uniqueness} are given in Section~\ref{sec: consequences of main thm}. 
	
	\section{Preliminaries}
	\label{sec: Preliminaries}
	
	Throughout this article $\Omega \subset \R^n$ is always a bounded open set, where $n\geq 1$ is a fixed positive integer, and $0<s<1$. In this section, we recall the fundamental properties of the classical fractional Sobolev spaces $W^{s,p}(\R^n)$ and their local analogues as well as introduce the nonlocal operators which will be used later on. 
	
	\subsection{Function spaces}
	\label{subsec: Function spaces}
	By $L^0(\Omega)$ we label the space of (Lebesgue) measurable functions on $\Omega$. The classical Sobolev spaces of order $k\in\N$ and integrability exponent $1\leq p\leq \infty$ are denoted by $W^{k,p}(\Omega)$ and for $k=0$ we use the convention $W^{0,p}(\Omega)=L^p(\Omega)$. Moreover, we let $W^{s,p}(\Omega)$ stand for the fractional Sobolev spaces, when $s\in(0,1)$ and $1\leq p < \infty$. These spaces are also called Slobodeckij spaces or Gagliardo spaces. If $1\leq p<\infty$ and $s\in (0,1)$, then they are defined by
	\[
	W^{s,p}(\Omega)\vcentcolon =\left\{\,u\in L^p(\Omega)\,;\, [u]_{W^{s,p}(\Omega)}<\infty\, \right\},
	\]
	where 
	\[
	[u]_{W^{s,p}(\Omega)}\vcentcolon =\left(\int_{\Omega}\int_{\Omega}\frac{|u(x)-u(y)|^p}{|x-y|^{n+s p}}\,dxdy\right)^{1/p}
	\]
	is the so-called Gagliardo seminorm. The fractional Sobolev spaces are naturally endowed with the norm
	\[
	\|u\|_{W^{s,p}(\Omega)}\vcentcolon =\left(\|u\|_{L^{p}(\Omega)}^p+[u]_{W^{s,p}(\Omega)}^p\right)^{1/p}.
	\]
	The space of test functions we are going to use later in the definition of weak solutions to our PDEs is:
	\[
	\widetilde{W}^{s,p}(\Omega)\vcentcolon =\text{closure of }C_c^{\infty}(\Omega) \text{ with respect to } \|\cdot\|_{W^{s,p}(\R^n)}.
	\]
	Similarly, as the classical Sobolev spaces, the spaces $W^{s,p}(\R^n)$ are separable for $1\leq p<\infty$ and reflexive for $1<p<\infty$ (see~\cite[Section~7]{SobolevSpacesCompact}). Since $\widetilde{W}^{s,p}(\Omega)$ is a closed subspace of $W^{s,p}(\R^n)$ it has the same properties. We remark that it is known that $\widetilde{W}^{s,p}(\Omega)$ coincides with the set of all functions $u\in W^{s,p}(\R^n)$ such that $u=0$ almost everywhere (a.e.) in $\Omega^c$, when $\partial\Omega\in C^0$, and with 
	\[
	W^{s,p}_0(\Omega)\vcentcolon =\text{closure of }C_c^{\infty}(\Omega) \text{ with respect to }\|\cdot\|_{W^{s,p}(\Omega)},
	\]
	whenever $\Omega\Subset\R^n$ has a Lipschitz boundary (see~\cite[Section~2]{WienerCriterion}).\\
	
	On these spaces $\widetilde{W}^{s,p}(\Omega)$, the following Poincar\'e inequality holds:
	\begin{proposition}[{Poincar\'e inequality, \cite[Theorem~2.8]{WienerCriterion}}]
		\label{thm: Poincare}
		Let $\Omega\Subset \R^n$, $0<s<1$ and $1<p<\infty$, then there exists a constant $C=C(n,s,p,\text{diam}(\Omega))>0$ such that
		\begin{equation}
			\label{eq: Poincare}
			\|u\|^p_{L^p(\R^n)}\leq C[u]^p_{W^{s,p}(\R^n)}
		\end{equation}
		for all $u\in\widetilde{W}^{s,p}(\Omega)$.
	\end{proposition}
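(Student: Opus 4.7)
The plan is to prove the inequality first for $u\in C_c^\infty(\Omega)$ and then extend to $\widetilde{W}^{s,p}(\Omega)$ by a standard density argument, since this space is by definition the $\|\cdot\|_{W^{s,p}(\R^n)}$-closure of $C_c^\infty(\Omega)$. The key observation is that, because $\Omega$ is bounded, there is ample room outside $\Omega$ on which any such $u$ must vanish, and this vanishing on a set of positive measure can be converted into an $L^p$ bound by the Gagliardo seminorm.

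Concretely, I would first fix an auxiliary ball $B=B_r(y_0)$ disjoint from $\Omega$ whose radius $r$ and center $y_0$ depend only on $\operatorname{diam}(\Omega)$; for instance, enclose $\Omega$ in a ball of radius $\operatorname{diam}(\Omega)$ and let $B$ be an adjacent congruent ball outside it. Set $D:=\operatorname{diam}(\Omega\cup B)$, which is likewise controlled by $\operatorname{diam}(\Omega)$. For $u\in C_c^\infty(\Omega)$ one has $u\equiv 0$ on $B$, so $|u(x)|=|u(x)-u(y)|$ for every $x\in\Omega$ and $y\in B$. Averaging in $y$ over $B$ and using $|x-y|\leq D$ gives
\begin{equation*}
|u(x)|^p=\frac{1}{|B|}\int_B|u(x)-u(y)|^p\,dy\leq \frac{D^{n+sp}}{|B|}\int_B\frac{|u(x)-u(y)|^p}{|x-y|^{n+sp}}\,dy.
\end{equation*}
Integrating in $x$ over $\Omega$, enlarging the domain of integration to $\R^n\times\R^n$, and using $\supp(u)\subset\Omega$ yields
\begin{equation*}
\|u\|_{L^p(\R^n)}^p=\int_\Omega|u(x)|^p\,dx\leq \frac{D^{n+sp}}{|B|}\,[u]_{W^{s,p}(\R^n)}^p,
\end{equation*}
which is the desired inequality with $C=D^{n+sp}/|B|$ depending only on $n,s,p,\operatorname{diam}(\Omega)$.

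For a general $u\in\widetilde{W}^{s,p}(\Omega)$ I would choose $u_k\in C_c^\infty(\Omega)$ with $u_k\to u$ in $W^{s,p}(\R^n)$, apply the inequality to each $u_k$, and pass to the limit, using continuity of both $\|\cdot\|_{L^p(\R^n)}$ and $[\,\cdot\,]_{W^{s,p}(\R^n)}$ under this convergence. I do not expect any real obstacle here: the argument is essentially the classical Friedrichs-type trick adapted to the fractional setting. The one point deserving some attention is tracking the constant to confirm that it depends only on $n,s,p$ and $\operatorname{diam}(\Omega)$, which is ensured by the explicit geometric choice of $B$ above. The range $1<p<\infty$ introduces no additional difficulty beyond the Hilbert case $p=2$.
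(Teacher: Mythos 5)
Your proof is correct. The paper does not actually prove this proposition from scratch; it cites it as Theorem~2.8 of the reference tagged \texttt{WienerCriterion}, and the subsequent remark only notes that the cited estimate is stated for $C_c^\infty(\Omega)$ and extends to $\widetilde{W}^{s,p}(\Omega)$ by density — the same extension step you carry out. Your argument instead gives a short, self-contained derivation via the Friedrichs-type trick of comparing $u$ against its (vanishing) values on a disjoint ball $B$, which is a clean alternative and recovers the same dependence of the constant: with $r,y_0$ chosen so that $r\sim\operatorname{diam}(\Omega)$ and $D\sim\operatorname{diam}(\Omega)$, you get $C=D^{n+sp}/|B|\lesssim_{n}(\operatorname{diam}(\Omega))^{sp}$, matching the scaling $C_*\leq C_1(\operatorname{diam}(\Omega))^{sp}$ recorded in the paper's remark. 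The only stylistic caveat is that one should be explicit that $\Omega$ and $B$ are both contained in some ball of radius comparable to $\operatorname{diam}(\Omega)$ so that $D$ is genuinely controlled by $\operatorname{diam}(\Omega)$; your construction ("adjacent congruent ball") does ensure this. The density passage is unproblematic since $u\mapsto\|u\|_{L^p(\R^n)}$ and $u\mapsto[u]_{W^{s,p}(\R^n)}$ are both continuous on $W^{s,p}(\R^n)$.
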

	
	\begin{remark}
		In the above theorem and from now on, we write $V\Subset W$ for two open subset $V, W\subset\R^n$ if $V$ is compactly contained in $W$. By the proof of \cite[Theorem~2.8]{WienerCriterion} it follows that the optimal constant $C_*>0$ in \eqref{eq: Poincare} satisfies $C_*\leq C_1(\mathrm{diam}(\Omega))^{sp}$ for some $C_1=C_1(n,s,p)>0$. Moreover, we used here that by \cite[Theorem~2.8]{WienerCriterion} the estimate \eqref{eq: Poincare} holds for all functions $u\in C_c^{\infty}(\Omega)$, but then the definition of the spaces $\widetilde{W}^{s,p}(\Omega)$ implies that by approximation it holds for all functions in this space. 
	\end{remark}
	
	
	\subsection{Nonlocal opeators}
	\label{subsec: Nonlocal operators}
	
	Next we introduce the fractional $s$-gradient $d_s$, the fractional $s$-divergence $\Div_s$, the fractional $p\,$-Laplacian $(-\Delta)^s_p$ and the weighted fractional $p\,$-Laplacians, which are the main object of study in this article. 
	
	For this purpose, let us denote by $L^0(\bigwedge_{od}^1\R^n)$ the space of measurable off diagonal vector fields, that is, the set of all functions $F\colon\R^n\times \R^n \to \R$ which are measurable with respect to the measure $d\mu\vcentcolon =\frac{dxdy}{|x-y|^n}$ on $\R^n\times \R^n$.
	We next give rigorous definitions of $s$-gradient and $s$-divergence.
	
	\begin{definition}[$s$-gradient]
		For any $0<s<1$, the $s$-gradient $d_s$ is defined by $d_s\colon L^0(\R^n)\to L^0(\bigwedge_{od}^1\R^n)$ such that 
		\begin{equation}
			\label{eq: fractiona gradient}
			d_su(x,y)\vcentcolon = \frac{u(x)-u(y)}{|x-y|^s}.
		\end{equation}
	\end{definition}
	
	Moreover, one can immediately observe that it satisfies the product rule
	\begin{equation}
		\label{eq: product rule}
		d_s(\varphi\psi)(x,y)=\varphi(x)d_s\psi(x,y)+\psi(y)d_s\varphi(x,y),
	\end{equation}
	for a.e. $x,y\in\R^n$ and $\varphi,\psi\colon \R^n\to\R$. Moreover, we call the dual operation to the $s$-gradient $d_s$ the $s$-divergence $\Div_s$. 
	
	\begin{definition}[$s$-divergence]
		For any $0<s<1$, the $s$-divergence is the unbounded operator $\Div_s :\oddvf{\R^n} \to  L^0(\R^n)$ given by
		\begin{equation}
			\label{eq: fractional divergence}
			\left\langle \Div_s F,\varphi \right\rangle =\int_{\R^{2n}}\frac{F(x,y)d_s\varphi(x,y)}{|x-y|^n}\, dxdy, \text{ for all }\varphi\in C_c^{\infty}(\R^n).
		\end{equation}
	\end{definition}
	With these definitions at our disposal, we have a canonical relation to the fractional Laplacian. In fact, there holds $\Div_s\circ \:d_s=(-\Delta)^s$ in the sense that 
	\begin{align*}
		\int_{\R^{2n}}\frac{d_s \varphi(x,y) \, d_s \psi(x,y)}{|x-y|^n} \, dxdy = \int_{\R^n} (-\Delta)^s \varphi(x) \, \psi(x) \, dx,
	\end{align*}
	for all sufficiently regular functions $\varphi,\psi\colon\R^n\to\R$, where $(-\Delta)^s$ denotes the fractional Laplacian of order $s\in (0,1)$ (up to a normalization constant).
	On the other hand, by the above definitions the operator $\Div_s(|d_su|^{p-2}d_su)$ is weakly given by
	\begin{equation}\label{eq: new fractional divergence equation}
		\begin{split}
			&\langle \Div_s(|d_su|^{p-2}d_su),\varphi\rangle\\
			:=&\int_{\R^{2n}}|u(x)-u(y)|^{p-2}\frac{(u(x)-u(y))(\varphi(x)-\varphi(y))}{|x-y|^{n+sp}}\,dxdy
		\end{split}
	\end{equation}
	for all $\varphi\in C_c^{\infty}(\R^n)$. 
	
	Hence, up to normalization, this is precisely the weak formulation of the fractional $p\,$-Laplacian. Furthermore, if the function $u$ is sufficiently smooth then the fractional $p\,$-Laplacian can be calculated in a pointwise sense by
	\begin{equation}
		\label{eq: fractional p Laplacian}
		(-\Delta)^s_pu(x) =C\,\mathrm{P.V.}\,\int_{\R^n}|u(x)-u(y)|^{p-2}\frac{u(x)-u(y)}{|x-y|^{n+sp}}\,dy
	\end{equation}
	for some constant $C=C(n,s,p)>0$ (see~\cite{del_Teso_2021}), where $\mathrm{P.V.}$ denotes the Cauchy principal value. For example one can take $u\in C_c^{\infty}(\R^n)$ with the additional condition $\nabla u\neq 0$ when $p\in (1,\frac{2}{2-s})$. The choice of the constant only becomes important when one wants to prove the following limiting behaviour 
	\begin{equation}\label{eq: limit behaviour}
		\begin{cases}
			(-\Delta)^s_pu\to (-\Delta)^s u &\text{ in }\R^n \text{ as }p\downarrow 2,\\
			(-\Delta)^s_p u\to (-\Delta)_pu &\text{ in }\R^n\text{ as }s\uparrow 1
		\end{cases}
	\end{equation}
	(see \cite[Sectiopn~5]{del_Teso_2021}). 
	Additionally, in the recent article \cite{StabilityFracpLap} (see also \cite{DINEPV-hitchhiker-sobolev} or \cite{BrezisAnotherLookSobolev}) the authors showed that if $\Omega\Subset\R^n$ is a bounded Lipschitz domain then there holds
	\begin{equation}
		\label{eq: convergence of energy functionals}
		\lim_{s\uparrow 1}(1-s)[u]_{W^{s,p}(\R^n)}^p=C(n,p)\|\nabla u\|_{L^p(\Omega)}^p,
	\end{equation}
	for some positive constant $C(n,p)$ and all $u\in W^{1,p}_0(\Omega)$. This observation is then used to show that for fixed $m\in \N$ and $1<p<\infty$ the Dirichlet eigenvalues $(\lambda^s_{m,p}(\Omega))_{s\in (0,1)}$ of the fractional $p\,$-Laplacian (after a suitable renormalization) converge to a multiple of the Dirichlet eigenvalue $\lambda_{m,p}^1(\Omega)$ of the $p\,$-Laplacian as $s\uparrow 1$ and any sequence of normalized eigenfunctions $(u^s_{m,p})_{s\in (0,1)}$ (up to a subsequence) converge to a normalized eigenfunction $u_{m,p}^1$ of the $p\,$-Laplacian as $s\uparrow 1$. 
	
	In this work, we do not pursue the limit behaviour for either $s\uparrow 1$ or $p\to 2$, but we will focus on the exterior determination results for the fractional $p\,$-Laplace equation \eqref{eq: fractional-p-Lap equation}.

	Furthermore, let us point out that there is also a Caffarelli--Silvestre extension type result for the fractional $p\,$-Laplacian, when one replaces the weight $y^{1-2s}$ by $y^{1-sp}$ (see \cite[Section~3]{del_Teso_2021}). Since in this result the function $u$ is required to be $C^2$ regular, the authors do not see an immediate way to generalize to proof of the UCP for the fractional Laplacian in \cite{GSU20} (see also \cite{KRZ2022Biharm}) to the fractional $p\,$-Laplacian. Finally, note that if the fractional $p\,$-Laplacian would have the UCP, then similar methods as in \cite{KRZ2022Biharm} (see also \cite{Guo-Kar-Salo}) could be invoked to prove global uniqueness of the coefficients.
	
	\subsection*{Conventions} Throughout this article, we denote by $B_r(x_0)$, $Q_r(x_0)$ for $r>0$, $x_0\in\R^n$ the open ball of radius $r$ with center $x_0$ and the open cube of side length $2r$ with center $x_0$. Moreover, if $x_0=0$ than we also write $B_r$ and $Q_r$.

	\section{The forward problem and DN map}
	\label{sec: The forward problem and DN map}
	
	In this section we first state an auxilliary lemma which will be of constant use in this work and then establish the well-posedness of the Dirichelt problem related to the fractional $p\,$-Laplace equations \eqref{eq: fractional p laplace type equation}. Finally, we introduce the DN map and show that it induces a continuous map from the trace space to its dual.
	
	\subsection{Auxiliary lemma}
	\label{subsec: Auxiliary lemmas}
	
	\begin{lemma}[{cf.~\cite[eq.~(2.2)]{Simon}, \cite[Lemma 5.1-5.2]{Estimate_p_Laplacian} and \cite[Appendix~A]{Salo:Zhong:2012}}]
		\label{lemma: Auxiliary lemma}
		Let $n\in\N$, $1<p<\infty$, then there exists a constant $c_p>0$ such that for all $x,y\in\R^{n}$, there holds
		\begin{equation}
			\label{eq: superquad auxiliary}
			\left( |x|^{p-2}x-|y|^{p-2}y\right) \cdot (x-y)\geq c_p|x-y|^p
		\end{equation}
		if $p\geq 2$ and
		\begin{equation}
			\label{eq: subquad auxiliary}
			\left(|x|^{p-2}x-|y|^{p-2}y\right)\cdot (x-y)\geq c_p\frac{|x-y|^2}{(|x|+|y|)^{2-p}}
		\end{equation}
		if $1<p<2$. Moreover, for all $1<p<\infty$ we have
		\begin{equation}
			\label{eq: estimate mikko}	
			\begin{split}	
				\left||\xi|^{p-2}\xi-|\eta|^{p-2}\eta\right|\leq C_p &(|\xi|+|\eta|)^{p-2}|\xi-\eta|
			\end{split}
		\end{equation}
		for all $\xi,\eta\in\R^n$ and some constant $C_p>0$.
	\end{lemma}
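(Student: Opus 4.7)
The plan is to reduce all three inequalities to properties of the vector field $F(z) \vcentcolon = |z|^{p-2}z$ on $\R^n$, which for $z\neq 0$ is $C^1$ with Jacobian
\[
DF(z) = |z|^{p-2}I + (p-2)|z|^{p-4}\, z\otimes z,
\]
so that the associated quadratic form on $h\in\R^n$ is
\[
h^\top DF(z)h = |z|^{p-2}|h|^2 + (p-2)|z|^{p-4}(z\cdot h)^2.
\]
I would fix $x,y\in\R^n$, set $z_t \vcentcolon = y+t(x-y)$, and use the fundamental theorem of calculus $F(x)-F(y)=\int_0^1 DF(z_t)(x-y)\,dt$, valid as long as the segment avoids the origin (the remaining case being handled by a standard limiting argument).

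For the sub-quadratic estimate \eqref{eq: subquad auxiliary}, since $(z\cdot h)^2\leq |z|^2|h|^2$ and $p-2<0$, one has $h^\top DF(z)h\geq (p-1)|z|^{p-2}|h|^2$, giving
\[
(|x|^{p-2}x-|y|^{p-2}y)\cdot(x-y)\geq (p-1)|x-y|^2\int_0^1 |z_t|^{p-2}\,dt.
\]
The bound $|z_t|\leq |x|+|y|$ combined with the negativity of $p-2$ yields $|z_t|^{p-2}\geq (|x|+|y|)^{p-2}$, and \eqref{eq: subquad auxiliary} follows with $c_p=p-1$.

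For the super-quadratic estimate \eqref{eq: superquad auxiliary}, I would drop the non-negative term $(p-2)|z|^{p-4}(z\cdot h)^2$ to obtain $h^\top DF(z)h\geq |z|^{p-2}|h|^2$. Assuming without loss of generality $|y|\leq |x|$, one has $|x|\geq |x-y|/2$; the triangle inequality then gives, for $t\in[3/4,1]$,
\[
|z_t|\geq t|x|-(1-t)|y|\geq (2t-1)|x|\geq |x|/2\geq |x-y|/4.
\]
Since $p-2\geq 0$, integrating $|z_t|^{p-2}$ over $[3/4,1]$ yields $\int_0^1 |z_t|^{p-2}\,dt\geq c_p|x-y|^{p-2}$ with $c_p=4^{1-p}/4$, proving \eqref{eq: superquad auxiliary}.

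For \eqref{eq: estimate mikko}, the pointwise operator bound $|DF(z)|\leq (1+|p-2|)|z|^{p-2}$ together with the mean-value representation yields $|F(\xi)-F(\eta)|\leq C_p\int_0^1 |z_t|^{p-2}\,dt\;|\xi-\eta|$. For $p\geq 2$, $|z_t|^{p-2}\leq (|\xi|+|\eta|)^{p-2}$ closes the argument at once. The main obstacle is the case $1<p<2$, where $|z|^{p-2}$ is singular at the origin and the integral representation is not directly usable. Here I would reduce to $|\xi|\leq |\eta|$, so that $(|\xi|+|\eta|)^{p-2}\geq 2^{p-2}|\eta|^{p-2}$, and split: when $|\xi-\eta|\geq |\eta|/2$, the triangle inequality $|F(\xi)-F(\eta)|\leq |\xi|^{p-1}+|\eta|^{p-1}\leq 2|\eta|^{p-1}\lesssim |\eta|^{p-2}|\xi-\eta|$ suffices; when $|\xi-\eta|<|\eta|/2$, the segment stays at distance $\geq |\eta|/2$ from the origin, so $|z_t|^{p-2}\leq (|\eta|/2)^{p-2}$ and direct integration gives the claim. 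Conceptually the argument is essentially the one in \cite{Simon,Estimate_p_Laplacian,Salo:Zhong:2012}; the only real work is the bookkeeping in this last sub-quadratic sub-case.
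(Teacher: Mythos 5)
The paper itself offers no proof of this lemma; it is stated with citations to the references (Simon, Lindgren--Lindqvist, and Salo--Zhong) and used as a black box. So there is no in-paper argument to compare against, and the only question is whether your proof is sound. It is: the approach --- write $F(z)=|z|^{p-2}z$, use the integral representation $F(x)-F(y)=\int_0^1 DF(z_t)(x-y)\,dt$ along $z_t=y+t(x-y)$, and bound the quadratic form $h^{\top}DF(z)h=|z|^{p-2}|h|^2+(p-2)|z|^{p-4}(z\cdot h)^2$ pointwise --- is precisely the classical argument from Simon's paper, and the three sub-cases (the $(p-1)$ lower bound for $1<p<2$; localizing to $t\in[3/4,1]$ for $p\geq 2$; the two-case split near/away from the origin for the Lipschitz-type bound when $1<p<2$) are all handled correctly. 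The one slip is cosmetic: in the super-quadratic case $\int_{3/4}^1|z_t|^{p-2}\,dt\geq \tfrac14(|x-y|/4)^{p-2}=4^{1-p}|x-y|^{p-2}$, so the constant is $4^{1-p}$ rather than $4^{1-p}/4$; this has no bearing on the result. You could also streamline the sub-quadratic Lipschitz case by noting that the near-origin split is only needed because $DF$ blows up at $0$, and that the same two-case structure already appears in \cite[Lemma~5.2]{Estimate_p_Laplacian}.
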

	
	\subsection{Well-posedness}
	\label{subsec: Well-posedness}
	Next we prove well-posedness of the forward problem. In this work, we use the following notion of weak solutions:
	\begin{definition}[Weak solutions]
		\label{def: weak solutions}
		Let $1<p<\infty$, $0<s<1$, $\Omega\subset\R^n$ be a bounded open set, and assume that $\sigma\colon \R^n\times \R^n\to \R$ satisfies the uniform ellipticity condition \eqref{eq: uniform ellipticity cond}.
		For any $f\in W^{s,p}(\R^n)$, we say that $u\in W^{s,p}(\R^n)$ is a weak solution to
		\begin{equation}
			\label{eq: PDE}
			\begin{cases}
				\Div_s(\sigma|d_su|^{p-2}d_su)=0&\text{ in } \Omega,\\
				u=f & \text{ in } \Omega_e,
			\end{cases}
		\end{equation}
		if $u$ is a distributional solution and $u-f\in \widetilde{W}^{s,p}(\Omega)$.
	\end{definition}
	\begin{remark}
		Observe that if $u$ is a distributional solution of \eqref{eq: PDE}, then there holds
		\[
		\int_{\R^{2n}}\sigma(x,y) |u(x)-u(y)|^{p-2}\frac{(u(x)-u(y))(\varphi(x)-\varphi(y))}{|x-y|^{n+sp}}\,dxdy=0
		\]
		for all $\varphi\in\widetilde{W}^{s,p}(\Omega)$. 
	\end{remark}
	
	\begin{theorem}[Well-posedness]
		\label{thm: well-posedness}
		Let $1<p<\infty$, $0<s<1$, $\Omega\subset\R^n$ be a bounded open set, and assume that $\sigma\colon \R^n\times \R^n\to \R$ satisfies the uniform ellipticity condition \eqref{eq: uniform ellipticity cond}. Then for any $f\in W^{s,p}(\R^n)$, there is a unique solution $u\in W^{s,p}(\R^n)$ of \eqref{eq: PDE}. In fact, it can be characterized as the unique minimizer of the fractional $p\,$-Dirichlet energy
		\begin{equation}
			\label{eq: frac p Dirichlet energy}
			E_{s,p,\sigma}(v)\vcentcolon = \int_{\R^{2n}}\sigma|d^sv|^p\frac{dxdy}{|x-y|^n}
		\end{equation}
		over the class of all $v\in W^{s,p}(\R^n)$ with prescribed exterior data $v=f$ in $\Omega_e$. Moreover, the unique solution satisfies the following estimate
		\begin{equation}
			\label{eq: estimate of solutions}
			[u]_{W^{s,p}(\R^n)}\leq C[f]_{W^{s,p}(\R^n)}
		\end{equation}
		for some $C>0$ depending only on $\lambda$ and $p$.
	\end{theorem}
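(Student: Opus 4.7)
The plan is to apply the direct method of the calculus of variations to the energy $E_{s,p,\sigma}$ on the closed convex affine class
\begin{equation*}
\mathcal{A}_f := \{ v \in W^{s,p}(\R^n) : v - f \in \widetilde{W}^{s,p}(\Omega) \},
\end{equation*}
which is nonempty (it contains $f$) and weakly closed in $W^{s,p}(\R^n)$, since $\widetilde{W}^{s,p}(\Omega)$ is by definition a closed, hence weakly closed, subspace. Four properties suffice. First, $E_{s,p,\sigma}$ is finite at $f$ by the upper bound on $\sigma$, namely $E_{s,p,\sigma}(f)\leq \lambda^{-1}[f]_{W^{s,p}(\R^n)}^p$. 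Second, coercivity on $\mathcal{A}_f$ follows from the lower ellipticity bound $E_{s,p,\sigma}(v) \geq \lambda [v]_{W^{s,p}(\R^n)}^p$ combined with the Poincar\'e inequality (Proposition~\ref{thm: Poincare}) applied to $v-f \in \widetilde{W}^{s,p}(\Omega)$ and the triangle inequality, which together control $\|v\|_{W^{s,p}(\R^n)}$ up to constants depending only on $f$. Third, convexity of $E_{s,p,\sigma}$ together with strong continuity gives weak lower semicontinuity on $W^{s,p}(\R^n)$. Fourth, strict convexity on $\mathcal{A}_f$ follows from the strict convexity of $t \mapsto |t|^p$ for $p>1$, because two minimizers would have $d_s v_1 = d_s v_2$ almost everywhere, forcing $v_1-v_2$ to be constant and therefore zero, as the only constant in $L^p(\R^n)$ is $0$.

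Given these, a minimizing sequence in $\mathcal{A}_f$ is bounded and hence admits a weakly convergent subsequence by reflexivity of $W^{s,p}(\R^n)$; its limit lies in $\mathcal{A}_f$ and attains the infimum by weak lower semicontinuity, and strict convexity yields uniqueness. To identify the minimizer $u$ as the weak solution of \eqref{eq: PDE}, I would compute the first variation: for each $\varphi \in C_c^\infty(\Omega)$ the real-valued map $t \mapsto E_{s,p,\sigma}(u+t\varphi)$ is of class $C^1$ near $t=0$ with derivative
\begin{equation*}
p \int_{\R^{2n}} \sigma(x,y) |d_s u(x,y)|^{p-2} d_s u(x,y) \, d_s\varphi(x,y)\, \frac{dxdy}{|x-y|^n},
\end{equation*}
which must vanish by minimality; this is precisely the weighted version of \eqref{eq: new fractional divergence equation}, and a density argument extends it to all test functions in $\widetilde{W}^{s,p}(\Omega)$. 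The estimate \eqref{eq: estimate of solutions} is then immediate by using $f$ itself as competitor: $\lambda [u]_{W^{s,p}(\R^n)}^p \leq E_{s,p,\sigma}(u) \leq E_{s,p,\sigma}(f) \leq \lambda^{-1} [f]_{W^{s,p}(\R^n)}^p$, so $C = \lambda^{-2/p}$ suffices.

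The main technical point I expect is justifying the differentiation under the integral sign in the Euler--Lagrange step, particularly in the subquadratic regime $1<p<2$ where $\xi\mapsto|\xi|^p$ is only $C^1$ at the origin. Nevertheless, the difference quotient $\bigl(|d_s u + t\, d_s\varphi|^p - |d_s u|^p\bigr)/t$ is dominated pointwise for $|t|\leq 1$ by a constant multiple of $(|d_s u|+|d_s\varphi|)^{p-1}|d_s\varphi|$, which after elementary bounds on $(x+y)^{p-1}$ and Hölder's inequality with exponents $p/(p-1)$ and $p$ is integrable against $\sigma\, d\mu$, controlled by a combination of $[u]_{W^{s,p}(\R^n)}^{p-1}[\varphi]_{W^{s,p}(\R^n)}$ and $[\varphi]_{W^{s,p}(\R^n)}^p$. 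Dominated convergence then closes the argument; the finer inequality \eqref{eq: estimate mikko} of Lemma~\ref{lemma: Auxiliary lemma} is not needed here, but is available should one prefer to argue via monotonicity of the nonlinear operator $v\mapsto \Div_s(\sigma|d_s v|^{p-2}d_sv)$ rather than the variational route.
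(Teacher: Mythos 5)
Your proof is correct and follows the same direct-method scaffolding as the paper (weakly closed affine class, coercivity via Poincar\'e, convexity plus continuity giving weak lower semicontinuity, first variation to obtain the Euler--Lagrange equation), but two of your steps take a genuinely different and cleaner route. For uniqueness, the paper invokes the strong monotonicity inequalities of Lemma~\ref{lemma: Auxiliary lemma} and argues separately for $p\geq 2$ (via~\eqref{eq: superquad auxiliary}) and $1<p<2$ (via~\eqref{eq: subquad auxiliary} and a careful H\"older splitting), whereas you simply use strict convexity of $t\mapsto|t|^p$ pointwise under the integral, which forces $d_s v_1=d_s v_2$ $\mu$-a.e., hence $v_1-v_2$ constant and therefore zero in $L^p(\R^n)$; this is more elementary and treats both regimes of $p$ at once. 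For the a priori bound~\eqref{eq: estimate of solutions}, the paper tests the weak formulation with $u-f$ and applies Young's inequality, while you exploit the variational characterization and compare directly with the competitor $f$, yielding $\lambda[u]^p_{W^{s,p}(\R^n)}\leq E_{s,p,\sigma}(u)\leq E_{s,p,\sigma}(f)\leq\lambda^{-1}[f]^p_{W^{s,p}(\R^n)}$ and the explicit constant $C=\lambda^{-2/p}$; this is shorter and avoids any reference to the Euler--Lagrange equation. One thing worth noting: the strong monotonicity estimates~\eqref{eq: strong mon prop superquad} and~\eqref{eq: strong mon prop subquad} that the paper develops here are reused later (Corollary~\ref{cor: uniqueness in trace space} and Lemma~\ref{Correction_term_estimate}), so the paper's choice front-loads machinery that is needed anyway, whereas your proof is self-contained and minimal for the theorem at hand. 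Your treatment of the differentiability of $t\mapsto E_{s,p,\sigma}(u+t\varphi)$ via a pointwise dominant and dominated convergence is actually more explicit than the paper's brief remark, and your observation that the subquadratic case needs care is correctly resolved by the $(|d_su|+|d_s\varphi|)^{p-1}|d_s\varphi|$ bound.
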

	
	\begin{remark}
		\label{rem: larger test space}
		Let us observe that we can construct the solution $u$ as the unique minimizer of the functional $E_{s,p,\sigma}$ by the fact that our exterior condition $f$ is such that
		\[
		\int_{\Omega^c\times \Omega^c}\sigma(x,y)\frac{|f(x)-f(y)|^p}{|x-y|^{n+sp}}\,dxdy<\infty.
		\]
		If the exterior condition would be less regular one should only integrate over $\R^{2n}\setminus (\Omega^c\times \Omega^c)$ and impose the exterior value in the sense that $u=f$ a.e. in $\Omega^c$ (cf.~\cite[Section~3]{RosOton16-NonlocEllipticSurvey}).
	\end{remark}
	
	Before giving the proof of this well-posedness result, let us make the following elementary observation. The linear map $T\colon W^{s,p}(\R^n)\to L^p(\R^n)\times L^p(\R^{2n})$ given by
	\[
	u\mapsto \left(u,\frac{|u(x)-u(y)|}{|x-y|^{n/p+s}}\right),
	\]
	where the target space is endowed with the usual product norm, is an isometry. Thus, by arguing as in \cite[Proposition~8.1]{Brezis}, one sees that $W^{s,p}(\R^n)$ is separable in the range $1\leq p<\infty$ and reflexive when $1<p<\infty$. Now since $\widetilde{W}^{s,p}(\Omega)$ is a closed linear subspace of $W^{s,p}(\R^n)$ it follows that it has the same properties on the respective ranges. 
	
	\begin{proof}[Proof of Theorem \ref{thm: well-posedness}]
		We proceed similarly as in \cite[Theorem~5.8]{KRZ2022Biharm}. First we define the convex set
		\[
		\widetilde{W}^{s,p}_f(\Omega)\vcentcolon = \left\{\,u\in W^{s,p}(\R^n)\,:\,u-f\in \widetilde{W}^{s,p}(\Omega)\,\right\}\subset W^{s,p}(\R^n),
		\]
		and observe that it is weakly closed in the reflexive Banach space $W^{s,p}(\R^n)$. To see this assume that $(u_k)_{k\in\N}\subset \widetilde{W}^{s,p}_f(\Omega)$ converges weakly to some $u\in W^{s,p}(\R^n)$ as $k\to \infty$. This implies that the sequence $(u_k-f)_{k\in\N}\subset \widetilde{W}^{s,p}(\Omega)$ converges weakly to $u-f\in W^{s,p}(\R^n)$.

		Next, since weak limits are contained in the weak closure, the weak closure of convex sets coincide with the strong closure and $\widetilde{W}^{s,p}(\Omega)$ is by definition a closed subspace of $W^{s,p}(\R^n)$, we obtain that $u-f\in \widetilde{W}^{s,p}(\Omega)$. Hence, $\widetilde{W}^{s,p}_f(\Omega)$ is weakly closed in $W^{s,p}(\R^n)$. Next note that there holds
		\[
		|a\pm b|^p\geq 2^{1-p}|a|^p-|b|^p
		\]
		for all $a,b\in\R$. Hence, the uniform elliptic condition \eqref{eq: uniform ellipticity cond} of $\sigma$ and the Poincar\'e inequality (Proposition~\ref{thm: Poincare}) yield that
		\[
		\begin{split}
			\left|E_{s,p,\sigma}(u)\right|&\geq \lambda \left(2^{1-p}[u-f]_{W^{s,p}(\R^n)}^p-[f]_{W^{s,p}(\R^n)}^p\right)\\
			&\geq \frac{\lambda}{2^p}\left([u-f]_{W^{s,p}(\R^n)}^p+C\|u-f\|_{L^p(\R^n)}^p\right)-\lambda[f]_{W^{s,p}(\R^n)}^p\\
			&\geq C\|u-f\|^p_{W^{s,p}(\R^n)}-\lambda[f]_{W^{s,p}(\R^n)}^p\\
			&\geq C\|u\|_{W^{s,p}(\R^n)}-c\|f\|_{W^{s,p}(\R^n)}^p
		\end{split}
		\]
		for all $u\in \widetilde{W}^{s,p}_f(\Omega)$ and some constants $C,c>0$ only depending on $\lambda$ and $p$.

		Therefore, the functional $E_{s,p,\sigma}$ is coercive on $\widetilde{W}^{s,p}_f(\Omega)$, in the sense that 
		$$
		|E_{s,p,\sigma}(u)|\to \infty, \text{ when } \|u\|_{W^{s,p}(\R^n)}\to\infty,
		$$ 
		for $u\in \widetilde{W}^{s,p}_f(\Omega)$. We also observe that by the assumptions on $\sigma(x,y)$, the functional $E_{s,p,\sigma}$ is convex and continuous on the closed, convex set $\widetilde{W}^{s,p}_f(\Omega)\subset W^{s,p}(\R^n)$. It is known that this implies that $E_{s,p,\sigma}$ is weakly lower semi-continuous on $\widetilde{W}^{s,p}_f(\Omega)$ (for example, see~\cite[Proposition~2.10]{ConvexityOptimization}). Hence, using \cite[Theorem 1.2]{Variational-Methods} we see that there exists a minimizer $u\in\widetilde{W}_f^{s,p}(\Omega)$ of $E_{s,p,\sigma}$. 
		
		To proceed, let us show that the minimizer $u\in W^{s,p}(\R^n)$ solves \eqref{eq: PDE} in the sense of distributions. Let $\var\in C_c^{\infty}(\Omega)$ then there holds $u_{\eps}\vcentcolon = u+\eps \var\in \widetilde{W}^{s,p}(\Omega)$ for any $\eps \in\R$. Moreover, by H\"older's inequality and the dominated convergence theorem, one can see that $E_{s,p,\sigma}$ is a $C^1-$functional. Hence, the fact that $u$ is a minimizer implies that there holds
		\[
		\begin{split}
			0&=\left.\frac{d}{d\eps}\right|_{\eps=0}E_{s,p,\sigma}(u_{\eps})\\
			&=p\int_{\R^{2n}}\sigma(x,y)|u(x)-u(y)|^{p-2}\frac{(u(x)-u(y))(\var(x)-\var(y))}{|x-y|^{n+sp}}\,dxdy,
		\end{split}
		\]
		and the claim follows. It remains to prove that the minimizer $u$ is unique. Let us first show the assertion for the range $2\leq p<\infty$ and then for $1<p<2$.
		
		\begin{itemize}
			\item[(i)] First suppose that $2\leq p<\infty$. Let $u,v\in W^{s,p}(\R^n)$ and set 
			$$
			\delta_{x,y}w\vcentcolon = w(x)-w(y),
			$$
			for any function $w\colon \R^n\to\R$. Using the estimate \eqref{eq: superquad auxiliary} of Lemma~\ref{lemma: Auxiliary lemma}, we have the following strong monotonicity property
			\begin{equation}
				\label{eq: strong mon prop superquad}
				\begin{split}
					&\int_{\R^{2n}}\sigma\left(|\delta_{x,y}u|^{p-2}\delta_{x,y}u-|\delta_{x,y}v|^{p-2}\delta_{x,y}v\right)\left(\delta_{x,y}u-\delta_{x,y}v\right)\frac{dxdy}{|x-y|^{n+sp}}\\
					\geq &\lambda c_p\int_{\R^{2n}}\frac{|\delta_{x,y}u-\delta_{x,y}v|^p}{|x-y|^{n+sp}}\,dxdy\\
					=&\lambda c_p\int_{\R^{2n}}\frac{|u(x)-u(y)-(v(x)-v(y))|^p}{|x-y|^{n+sp}}\,dxdy\\
					=&\lambda c_p[u-v]_{W^{s,p}(\R^n)}^p.
				\end{split}
			\end{equation}
			Now, if $u,v\in W^{s,p}(\R^n)$ are solutions to  \eqref{eq: PDE} then $u-v\in \widetilde{W}^{s,p}(\Omega)$. Hence the left hand side vanishes and by the Poincar\'e inequality (Theorem~\ref{thm: Poincare}) we can lower bound the right hand side by some positive multiple of $\|u-v\|_{L^p(\R^n)}^p$ but this gives $u=v$ a.e. in $\R^n$.
			
			\item[(ii)] Next let $1<p<2$. Using the same notation as before, we obtain by raising the estimate \eqref{eq: subquad auxiliary} of Lemma~\ref{lemma: Auxiliary lemma} to the power $p/2$ the bound
			\[
			c_p^{p/2}|a-b|^p\leq \left[(|a|^{p-2}a-|b|^{p-2}b)\cdot (a-b)\right]^{p/2}(|a|+|b|)^{(2-p)\frac{p}{2}}
			\]
			for all $a,b\in\R^n$. Now using H\"older's inequality with $\frac{2-p}{2}+\frac{p}{2}=1$ and the uniform ellipticity of $\sigma$, we deduce
			\begin{equation}
				\label{eq: strong mon prop subquad}
				\begin{split}
					&\lambda^{p/2} c^{p/2}_p [u-v]_{W^{s,p}(\R^n)}^p\\
					\leq  & \int_{\R^{2n}}\left|\delta_{x,y}u-\delta_{x,y}v\right|^p\frac{\,dxdy}{|x-y|^{n+sp}}\\
					\leq & \lambda^{p/2} c^{p/2}_p\int_{\R^{2n}}\left[(|\delta_{x,y}u|^{p-2}\delta_{x,y}u-|\delta_{x,y}v|^{p-2}\delta_{x,y}v) (\delta_{x,y}u-\delta_{x,y}v)\right]^{p/2}\\
					&  \cdot \left(|\delta_{x,y}u|+|\delta_{x,y}v|\right)^{(2-p)\frac{p}{2}}\frac{dxdy}{|x-y|^{n+sp}}\\
					\leq &\lambda^{p/2} c^{p/2}_p\|\left(|\delta_{x,y}u|+|\delta_{x,y}v|\right)^{(2-p)\frac{p}{2}}\|_{L^{\frac{2}{2-p}}(\R^n;|x-y|^{-(n+sp)})}\\
					&  \cdot \Big\|[\left(|\delta_{x,y}u|^{p-2}\delta_{x,y}u \right. \\
					& \qquad \left.-|\delta_{x,y}v|^{p-2}\delta_{x,y}v\right) (\delta_{x,y}u-\delta_{x,y}v)]^{p/2}\Big\|_{L^{2/p}(\R^n;|x-y|^{-(n+sp)})} \\
					\leq &c^{p/2}_p2^{p-1} ([u]_{W^{s,p}(\R^n)}^p+[v]_{W^{s,p}(\R^n)}^p)^{\frac{2-p}{2}}\\
					&\cdot  \Bigg(\int_{\R^{2n}}\sigma (|\delta_{x,y}u|^{p-2}\delta_{x,y}u \\ & \qquad -|\delta_{x,y}v|^{p-2}\delta_{x,y}v) (\delta_{x,y}u-\delta_{x,y}v) \frac{dxdy}{|x-y|^{n+sp}}\Bigg)^{p/2}.
				\end{split}
			\end{equation}
			Now, arguing as for the previous case $p\geq 2$, the right hand side vanishes if $u,v\in W^{s,p}(\R^n)$ are solutions to \eqref{eq: PDE} as $u-v\in \widetilde{W}^{s,p}(\Omega)$ and the left hand side can be lower bounded by a positive multiples of $\|u-v\|_{L^p(\R^n)}^p$ by using the Poincar\'e inequality again. Hence, we can conclude that $u=v$ a.e. in $\R^n$.
		\end{itemize}

		To complete the proof, let us establish the estimate \eqref{eq: estimate of solutions}. By Remark~\ref{rem: larger test space}, we can test the equation \eqref{eq: PDE} with any $\var\in \widetilde{W}^{s,p}(\Omega)$ and in particular with $u-f$. Using the uniform ellipticitiy \eqref{eq: uniform ellipticity cond}, writing $u=(u-f)+f$ and applying Young's inequality, we obtain
		\[
		\begin{split}
			\lambda[u]^p_{W^{s,p}(\R^n)}&\leq \int_{\R^{2n}}\sigma(x,y)\frac{|u(x)-u(y)|^p}{|x-y|^{n+sp}}\,dxdy\\
			&\leq \int_{\R^{2n}}\sigma(x,y)|u(x)-u(y)|^{p-2}\frac{(u(x)-u(y))(u(x)-u(y))}{|x-y|^{n+sp}}\,dxdy\\
			&=\int_{\R^{2n}}\sigma(x,y)|u(x)-u(y)|^{p-2}\frac{(u(x)-u(y))(f(x)-f(y))}{|x-y|^{n+sp}}\,dxdy\\
			&\leq \lambda^{-1}\int_{\R^{2n}}\frac{|u(x)-u(y)|^{p-1}\,|f(x)-f(y)|}{|x-y|^{n+sp}}\,dxdy\\
			&\underbrace{\leq \epsilon [u]^p_{W^{s,p}(\R^n)}+C_{\epsilon}\lambda^{-p}[f]_{W^{s,p}(\R^n)}^p}_{ab\leq \epsilon a^p+C_{\epsilon}b^{p'}\text{ with $1/p+1/p'=1$}},
		\end{split}
		\]
		for any $\epsilon>0$, where $C_\epsilon>0$ is a constant depending on $\epsilon$.
		In particular,  by choosing $\epsilon=\lambda/2$, we obtain $[u]_{W^{s,p}(\R^n)}\leq C[f]_{W^{s,p}(\R^n)}$
		for some $C>0$ depending only on $\lambda$ and $p$. This proves the assertion.
	\end{proof}
	
	Next we introduce the abstract trace space:
	\begin{definition}
		\label{eq: abstract trace spaces}
		Let $\Omega\subset\R^n$ be an open set, $0<s<1$ and $1<p<\infty$. Then the abstract trace space $X^{s,p}(\Omega)$ is given by $X^{s,p}(\Omega)\vcentcolon = W^{s,p}(\R^n)/\widetilde{W}^{s,p}(\Omega)$, and we endow it with the quotient norm
		\[
		\|f\|_{X^{s,p}(\Omega)}\vcentcolon = \inf_{u\in \widetilde{W}^{s,p}(\Omega)}\|f-u\|_{W^{s,p}(\R^n)}.
		\]
	\end{definition}
	\begin{remark}
		\label{remark: trace space} Let us point out:
		\begin{itemize}
			\item[(i)] In the above definition and later on we simply write $f$ for an element in $X^{s,p}(\Omega)$ instead of the more precise notation $[f]$. Note that since $\widetilde{W}^{s,p}(\Omega)$ is a closed subspace of the Banach space $W^{s,p}(\R^n)$, the abstract trace space is again a separable, reflexive Banach space in the respective ranges $1\leq p<\infty$ and $1<p<\infty$.
			\item[(ii)] If $\Omega\subset\R^n$ has a bounded Lipschitz continuous boundary then any $f\in W^{s,p}(\Omega_e)$ with $\mathrm{dist}(\supp{f},\partial\Omega)>0$ corresponds to a unique equivalence class in $X^{s,p}(\Omega)$ and the quotient norm is equivalent to the $\|\cdot\|_{W^{s,p}(\Omega_e)}$ norm. In fact, by \cite[Lemma~3.2]{StabilityFracCondEq} the zero extension $\overline{f}$ of $f$ belongs to $W^{s,p}(\R^n)$ and satisfies $\|\overline{f}\|_{W^{s,p}(\R^n)}\leq C\|f\|_{W^{s,p}(\Omega_e)}$. Hence, we implicitly identify below $f$ with the equivalence class $[\overline{f}]$. Moreover, by definition we have $\|\overline{f}\|_{X^{s,p}(\Omega)}\leq \|\overline{f}\|_{W^{s,p}(\R^n)}\leq C\|f\|_{W^{s,p}(\Omega_e)}$. On the other hand, there exists a sequence $u_k\in\widetilde{W}^{s,p}(\Omega)$, $k\in\N$, such that $\|f-u_k\|_{W^{s,p}(\R^n)}\to \|\overline{f}\|_{X^{s,p}(\Omega)}$ as $k\to\infty$. 
			Since $\partial\Omega$ has measure zero, we know that $u_k$ vanish a.e. in $\Omega_e$ and thus there holds
			\[
			\begin{split}
				\|f\|_{W^{s,p}(\Omega_e)}&= \|f-u_k\|_{W^{s,p}(\Omega_e)}\leq \|\overline{f}-u_k\|_{W^{s,p}(\R^n)}\to \|\overline{f}\|_{X^{s,p}(\Omega)}
			\end{split}
			\]
			as $k\to\infty$. This shows that these two norms are equivalent.
		\end{itemize}
		
	\end{remark}
	
	We have the following uniqueness result:
	
	\begin{corollary}
		\label{cor: uniqueness in trace space}
		Let $1<p<\infty$, $0<s<1$, $\Omega\subset\R^n$ a bounded open set and assume that $\sigma\colon \R^n\times \R^n\to \R$ satisfies the uniform ellipticity condition \eqref{eq: uniform ellipticity cond}. Let $u_j\in W^{s,p}(\R^n)$ be the unique solutions of \eqref{eq: PDE} with exterior values $f_j\in W^{s,p}(\R^n)$ for $j=1,2$. If $f_1-f_2\in\widetilde{W}^{s,p}(\Omega)$, then $u_1=u_2$.
	\end{corollary}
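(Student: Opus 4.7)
The proof is essentially a one-line consequence of Theorem~\ref{thm: well-posedness} together with the flexibility of Definition~\ref{def: weak solutions}, which imposes the exterior condition only in the abstract sense $u-f\in\widetilde{W}^{s,p}(\Omega)$ rather than as a pointwise identity in $\Omega_e$. My plan is to show that $u_2$ qualifies as a weak solution to \eqref{eq: PDE} \emph{both} with exterior datum $f_2$ and with exterior datum $f_1$, and then invoke uniqueness.

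More precisely, by the definition of weak solution we have $u_2-f_2\in\widetilde{W}^{s,p}(\Omega)$, and by the hypothesis of the corollary $f_1-f_2\in\widetilde{W}^{s,p}(\Omega)$. Since $\widetilde{W}^{s,p}(\Omega)$ is a linear subspace of $W^{s,p}(\R^n)$, I take the difference
\[
u_2-f_1=(u_2-f_2)-(f_1-f_2)\in \widetilde{W}^{s,p}(\Omega).
\]
As $u_2$ is still a distributional solution of $\Div_s(\sigma|d_su|^{p-2}d_su)=0$ in $\Omega$, the displayed membership shows that $u_2$ is a weak solution of \eqref{eq: PDE} in the sense of Definition~\ref{def: weak solutions} with exterior datum $f_1$. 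But $u_1$ is also such a solution, and so the uniqueness part of Theorem~\ref{thm: well-posedness} forces $u_1=u_2$ in $W^{s,p}(\R^n)$.

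There is no real obstacle here: the argument uses only that $\widetilde{W}^{s,p}(\Omega)$ is a vector space and that Theorem~\ref{thm: well-posedness} provides uniqueness of weak solutions for arbitrary exterior data $f\in W^{s,p}(\R^n)$. The only point worth emphasizing is that the hypothesis $f_1-f_2\in\widetilde{W}^{s,p}(\Omega)$ does \emph{not} require $f_1=f_2$ pointwise in $\Omega_e$, yet it is precisely what is needed to identify the two Dirichlet problems as one and the same at the level of the weak formulation; no further PDE estimate or regularity argument is needed.
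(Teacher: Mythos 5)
Your proof is correct and rests on the same key observation as the paper's, namely that the hypothesis $f_1-f_2\in\widetilde{W}^{s,p}(\Omega)$ makes the relevant difference lie in $\widetilde{W}^{s,p}(\Omega)$; the paper writes $u_1-u_2=(u_1-f_1)-(u_2-f_2)+(f_1-f_2)\in\widetilde{W}^{s,p}(\Omega)$ and then re-runs the monotonicity argument, while you equivalently show $u_2-f_1\in\widetilde{W}^{s,p}(\Omega)$ and invoke the uniqueness statement of Theorem~\ref{thm: well-posedness} as a black box. Your packaging is marginally cleaner since it avoids re-citing the strong monotonicity estimates, but it is the same idea.
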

	
	\begin{proof}
		By assumption we have $u_1-u_2=(u_1-f_1)-(u_2-f_2)+(f_1-f_2)\in\widetilde{W}^{s,p}(\Omega)$. Hence, arguing as in the proof of Theorem~\ref{thm: well-posedness}, the strong monotonicity properties \eqref{eq: strong mon prop superquad} for $p\geq 2$, and \eqref{eq: strong mon prop subquad} for $1<p<2$, respectively, show that $u_1=u_2$.
	\end{proof}
	
	\subsection{DN maps}
	\label{subsec: DN maps}
	With Theorem \ref{thm: well-posedness} at hand, we can introduce the DN map $\Lambda_{\sigma}$ to formulate the inverse problem.
	If $f\in W^{s,p}(\R^n)$, then the DN map is formally defined by
	\begin{equation}
		\label{eq: pointwise def DN map}
		\Lambda_\sigma(f)=\left.\Div_s(\sigma|d_su_f|^{p-2}d_su_f)\right|_{ \Omega_e},
	\end{equation}
	where $u_f\in W^{s,p}(\R^n)$ is the unique solution of  
	\begin{equation}
		\label{eq: PDE 1}
		\begin{cases}
			\Div_s(\sigma|d_su|^{p-2}d_su)=0& \text{ in }\Omega,\\
			u=f\ &\text{ in } \Omega_e,
		\end{cases}
	\end{equation} 
	(cf.~Theorem~\ref{thm: well-posedness}). As the solution $u_f$ is usually not regular enough to justify the pointwise definition \eqref{eq: pointwise def DN map}, we define it in general in the distributional sense by
	\begin{equation}
		\label{eq: weak formulation DN map}
		\left\langle\Lambda_\sigma(f), g\right\rangle\vcentcolon =\int_{\Omega_e}  \Lambda_\sigma(f) g\,dx\vcentcolon =\int_{\R^{2n}} \sigma\left|d_su\right|^{p-2} d_su \, d_s g \, \frac{dx dy}{|x-y|^n},
	\end{equation}
	where $f,g\in X^{s,p}(\Omega)$. We have:
	
	\begin{proposition}[DN maps]
		\label{prop: DN maps}
		Let $1<p<\infty$, $0<s<1$, $\Omega\subset\R^n$ a bounded open set and assume that $\sigma\colon \R^n\times \R^n\to \R$ satisfies the uniform ellipticity condition \eqref{eq: uniform ellipticity cond}. Then the DN map $\Lambda_{\sigma}$ introduced via \eqref{eq: weak formulation DN map} is a well-defined operator from $X^{s,p}(\Omega)$ to $(X^{s,p}(\Omega))^{\ast}$ and satisfies the estimate
		\begin{equation}
			\label{eq: Estimate DN map}
			\|\Lambda_{\sigma}(f)\|_{(X^{s,p}(\Omega))^*}\leq C\|f\|_{X^{s,p}(\Omega)}^{p-1}
		\end{equation}
		for all $f\in X^{s,p}(\Omega)$ and some $C>0$. Here $(X^{s,p}(\Omega))^{\ast}$ denotes the dual space of $X^{s,p}(\Omega)$.
	\end{proposition}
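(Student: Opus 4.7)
\textbf{Proof plan for Proposition~\ref{prop: DN maps}.} The plan is to verify three things in turn: (a) the right-hand side of \eqref{eq: weak formulation DN map} depends only on the equivalence classes $[f],[g]\in X^{s,p}(\Omega)$, so that $\Lambda_\sigma$ is genuinely defined on the quotient; (b) for fixed $f$, the map $g\mapsto\langle\Lambda_\sigma(f),g\rangle$ is a bounded linear functional on $X^{s,p}(\Omega)$; and (c) its operator norm satisfies the claimed growth estimate.

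First, I would establish well-definedness. Linearity in $g$ is immediate from the definition. Independence of the chosen representative of $[f]$ follows from Corollary~\ref{cor: uniqueness in trace space}: if $f_1-f_2\in\widetilde{W}^{s,p}(\Omega)$, then $u_{f_1}=u_{f_2}$, and consequently the integrand in \eqref{eq: weak formulation DN map} is the same. For independence of the representative of $[g]$, I use that $u_f$ is a weak solution of \eqref{eq: PDE 1}: for any $\psi\in\widetilde{W}^{s,p}(\Omega)$, the weak formulation (valid for all test functions in $\widetilde{W}^{s,p}(\Omega)$ by Remark~\ref{rem: larger test space}) gives
\[
\int_{\R^{2n}}\sigma(x,y)|d_su_f|^{p-2}d_su_f\,d_s\psi\,\frac{dxdy}{|x-y|^n}=0,
\]
so $\langle\Lambda_\sigma(f),g+\psi\rangle=\langle\Lambda_\sigma(f),g\rangle$.

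Next, for the norm bound, I would apply H\"older's inequality with conjugate exponents $p/(p-1)$ and $p$ to the integral in \eqref{eq: weak formulation DN map}, using the uniform ellipticity bound $\sigma\leq\lambda^{-1}$ from \eqref{eq: uniform ellipticity cond}. Since
\[
\sigma|d_su_f|^{p-2}d_su_f\cdot d_sg\,\frac{1}{|x-y|^n}\leq \lambda^{-1}\frac{|u_f(x)-u_f(y)|^{p-1}|g(x)-g(y)|}{|x-y|^{n+sp}},
\]
H\"older applied to the two factors (raised to powers $p/(p-1)$ and $p$ respectively, both integrated against $|x-y|^{-(n+sp)}\,dxdy$) yields
\[
|\langle\Lambda_\sigma(f),g\rangle|\leq \lambda^{-1}\,[u_f]_{W^{s,p}(\R^n)}^{p-1}\,[g]_{W^{s,p}(\R^n)}.
\]

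Finally, I would pass to the quotient norm on both sides. For $g$, by the independence from $\psi\in\widetilde{W}^{s,p}(\Omega)$ established above, one may replace $g$ by $g-\psi$ and take the infimum; since $[\,\cdot\,]_{W^{s,p}(\R^n)}\leq\|\cdot\|_{W^{s,p}(\R^n)}$, this gives the upper bound $\|g\|_{X^{s,p}(\Omega)}$. For $f$, the key observation is that for any $\psi\in\widetilde{W}^{s,p}(\Omega)$ the function $u_f$ is also the (unique) solution associated to the exterior datum $f-\psi$, because $u_f-(f-\psi)=(u_f-f)+\psi\in\widetilde{W}^{s,p}(\Omega)$. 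Applying the a priori estimate \eqref{eq: estimate of solutions} from Theorem~\ref{thm: well-posedness} to this datum gives $[u_f]_{W^{s,p}(\R^n)}\leq C\,[f-\psi]_{W^{s,p}(\R^n)}$, and taking the infimum over $\psi$ yields $[u_f]_{W^{s,p}(\R^n)}\leq C\|f\|_{X^{s,p}(\Omega)}$. Combining the two optimizations produces the bound \eqref{eq: Estimate DN map}.

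The computation is essentially routine once the correct test-function spaces are identified; the only mildly delicate point — and the step I would write out carefully — is the passage from the seminorm estimate to the quotient-norm estimate for the argument $f$, which relies on the fact that the weak-solution map is invariant under additive modifications by elements of $\widetilde{W}^{s,p}(\Omega)$, a direct consequence of Corollary~\ref{cor: uniqueness in trace space}.
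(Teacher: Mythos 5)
Your proposal is correct and follows essentially the same route as the paper: well-definedness on the quotient via Corollary~\ref{cor: uniqueness in trace space} together with the weak formulation for shifts of $g$, then H\"older's inequality with exponents $p/(p-1)$ and $p$ against the measure $|x-y|^{-(n+sp)}\,dxdy$, the a priori bound \eqref{eq: estimate of solutions}, and finally passage to quotient norms by taking infima over representatives. The only difference is presentational: you spell out the infimum step for $f$ (reusing that $u_f$ solves the problem for every representative $f-\psi$) more explicitly than the paper, which compresses it into ``taking the infimum over all representations''.
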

	\begin{proof}
		First note that by Corollary~\ref{cor: uniqueness in trace space} for any $f\in X^{s,p}(\Omega)$, there is a unique solution $u_f\in W^{s,p}(\R^n)$ of \eqref{eq: PDE 1}. Moreover, changing in the weak formulation of the DN map \eqref{eq: weak formulation DN map}, the function $g\in W^{s,p}(\R^n)$ to $g+\var$ with $\var\in \widetilde{W}^{s,p}(\Omega)$ does not change the value of the DN map as by construction $u_f$ solves \eqref{eq: PDE 1}. Hence, the DN map $\Lambda_{\sigma}$ is well-defined. 
		Finally, by the H\"older's inequality with $\frac{p-1}{p}+\frac{1}{p}=1$, we have
		\[
		\begin{split}
			\left|\left\langle\Lambda_\sigma(f), g\right\rangle\right| &\leq C\left\|d_su\right\|_{L^p(\R^{2n},|x-y|^{-n})}^{p-1}\|d_s g\|_{L^p(\R^{2n},|x-y|^{-n})} \\
			&\leq C\|f\|_{W^{s, p}(\R^n)}^{p-1}\|g\|_{W^{s, p}(\R^n)} 
		\end{split}.
		\]
		for all $f,g\in W^{s,p}(\R^n)$, for some constant $C>0$ independent of $f$ and $g$. Hence, taking the infimum over all representations of $f,g\in X^{s,p}(\Omega)$ and dividing by $\|g\|_{X^{s,p}(\Omega)}$, we obtain
		$$
		\left\|\Lambda_\sigma(f)\right\|_{(X^{s,p}(\Omega))^*} \leq C\|f\|_{X^{s,p}(\Omega)}^{p-1} .
		$$
		This completes the proof.
	\end{proof}
	
	\section{Exterior reconstruction}
	\label{sec: exterior reconstruction}
	
	In this section, we establish the exterior reconstruction result, which is the main theorem of this article.
	
	\begin{lemma}[Exterior conditions]
		\label{lem: exterior conditions} 
		Let $\Omega \subset \mathbb{R}^n$ be a bounded open set, $0<s<1$, $1\leq p<\infty$ and $x_0 \in W \subset \Omega_e$ for an open set $W$. There exists a sequence $\left(\Phi_N\right)_{N \in \mathbb{N}} \subset C_c^{\infty}(W)$ such that
		\begin{enumerate}[(i)]
			\item\label{prop 1 of exterior cond} for all $N \in \mathbb{N}$ it holds $[\Phi_N]_{W^{s,p}\left(\mathbb{R}^n\right)}=1$,
			\item\label{prop 2 of exterior cond}
			for all $0\leq t<s$ there holds $\|\Phi_N\|_{W^{t,p}(\R^n)}\to 0$ as $N\to\infty$
			\item\label{prop 3 of exterior cond}
			and $\supp\left(\Phi_N\right) \rightarrow\left\{x_0\right\}$ as $N \rightarrow \infty$.
		\end{enumerate}
	\end{lemma}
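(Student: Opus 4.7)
The natural approach is a concentration-by-rescaling argument: pick one bump and dilate it toward $x_0$, then renormalize so that the $W^{s,p}$-seminorm is exactly $1$. Fix an arbitrary $\phi \in C_c^{\infty}(B_1(0))$ with $\phi \not\equiv 0$, choose $N_0 \in \N$ large enough that $B_{1/N_0}(x_0) \Subset W$, and for $N \geq N_0$ set
\[
\phi_N(x) \vcentcolon = \phi(N(x-x_0)), \qquad \Phi_N(x) \vcentcolon = \frac{\phi_N(x)}{[\phi_N]_{W^{s,p}(\R^n)}}.
\]
Then $\Phi_N \in C_c^{\infty}(W)$ with $\supp(\Phi_N) \subset B_{1/N}(x_0)$, which directly gives property~\eqref{prop 3 of exterior cond} and makes \eqref{prop 1 of exterior cond} automatic by construction.

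\textbf{Key computation.} The core of the proof is the scaling law for Gagliardo seminorms. By the change of variables $u = N(x - x_0)$, $v = N(y - x_0)$, one computes
\[
[\phi_N]_{W^{t,p}(\R^n)}^p = N^{tp - n}\,[\phi]_{W^{t,p}(\R^n)}^p \quad \text{for all } t \in (0,1),
\]
and analogously $\|\phi_N\|_{L^p(\R^n)}^p = N^{-n}\,\|\phi\|_{L^p(\R^n)}^p$. In particular $[\phi_N]_{W^{s,p}(\R^n)} = N^{s - n/p}[\phi]_{W^{s,p}(\R^n)}$, which is strictly positive since $\phi$ is nonconstant, so the normalization defining $\Phi_N$ is legitimate.

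\textbf{Verification of \eqref{prop 2 of exterior cond}.} Combining the two scaling identities with the definition of $\Phi_N$, for $t \in (0,s)$ one has
\[
[\Phi_N]_{W^{t,p}(\R^n)} = N^{t-s}\,\frac{[\phi]_{W^{t,p}(\R^n)}}{[\phi]_{W^{s,p}(\R^n)}}, \qquad \|\Phi_N\|_{L^p(\R^n)} = N^{-s}\,\frac{\|\phi\|_{L^p(\R^n)}}{[\phi]_{W^{s,p}(\R^n)}},
\]
and both quantities tend to $0$ as $N \to \infty$ because $t - s < 0$ and $-s < 0$. The case $t = 0$ is contained in the second identity. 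Thus $\|\Phi_N\|_{W^{t,p}(\R^n)} \to 0$ for every $0 \leq t < s$.

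\textbf{Expected obstacle.} There really isn't a serious one: the argument is a direct scaling computation together with the trivial observation that any compactly supported bump centred at $x_0$ has finite, positive Gagliardo $s$-seminorm so that the normalization is well-defined. The only mild care required is to check that the normalized seminorm $[\phi]_{W^{s,p}(\R^n)}$ is finite and nonzero (finite because $\phi \in C_c^{\infty}(\R^n) \subset W^{s,p}(\R^n)$; nonzero because $\phi$ is not a.e.\ constant), and to pick $N_0$ large enough to keep the shrinking supports inside the open set $W$.
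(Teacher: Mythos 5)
Your proposal is correct and takes essentially the same route as the paper: rescale a fixed compactly supported bump toward $x_0$, exploit the scaling identities $\|\phi_N\|_{L^p(\R^n)} = N^{-n/p}\|\phi\|_{L^p(\R^n)}$ and $[\phi_N]_{W^{t,p}(\R^n)} = N^{t-n/p}[\phi]_{W^{t,p}(\R^n)}$, and normalize by the $W^{s,p}$ seminorm (the paper builds its bump as a tensor product on a cube $Q_1$ and translates/scales so that $x_0=0$, but this is purely cosmetic). The only point worth making explicit, which you handle adequately, is that $[\phi]_{W^{s,p}(\R^n)} \in (0,\infty)$ so the normalization is legitimate — the paper invokes the Poincaré inequality for this, while your observation that a nonzero $\phi\in C_c^\infty$ cannot be a.e.\ constant is equally valid.
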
 
	\begin{remark}
		Kohn and Vogelius proved a similar result in their celebrated work on boundary determination for the conductivity equation (cf. \cite[Lemma~1]{KohnVogelius} ) for the Sobolev spaces $H^s(\partial \Omega)$, where $\Omega \subset \mathbb{R}^n$ is an open bounded set with smooth boundary (see also \cite[Lemma~5.5]{CRZ2022global}).
	\end{remark}

	\begin{proof}[Proof of Lemma \ref{lem: exterior conditions}]
		By translation and scaling, we may assume that $Q_1\subset W$ and $x_0=0$ without loss of generality. Similarly, as in \cite{KohnVogelius}, we choose any nonzero $\psi\in C_c^{\infty}(\R)$ with $\supp(\psi)\subset (-1,1)$ and let $\Psi$ be the $n$-fold tensor product of $\psi$, that is $\Psi(x)=\prod_{k=1}^n\psi(x_k)$ with $x=(x_1,x_2,\ldots,x_k)$. Next we define the sequence $(\Psi_N)_{N\in\N}$ by $\Psi_N(x)\vcentcolon = \Psi(Nx)$. We clearly have $\Psi_N\in C_c^{\infty}(Q_{1/N})$ and $\supp(\Psi_N)\to \{0\}$. Moreover, by a simple change of variables we have
		\begin{equation}
			\label{eq: Lp norm}
			\|\Psi_N\|_{L^p(\R^n)}=N^{-n/p}\|\Psi\|_{L^p(\R^n)}
		\end{equation}
		and
		\begin{equation}
			\label{eq: Wtp norm}
			[\Psi_N]_{W^{t,p}(\R^n)}=N^{t-n/p}[\Psi]_{W^{t,p}(\R^n)}
		\end{equation}
		for all $0<t<1$, $1\leq p<\infty$ and $N\in\N$. Observe that $[\Psi]_{W^{t,p}(\R^n)}>0$ for all $0\leq t<1$. This is an immediate consequence of $0\neq \psi\in C_c^{\infty}((-1,1))$ and the Poincar\'e inequality (Theorem~\ref{thm: Poincare}). 
		Thus, for all $0\leq t<1$, $1\leq p<\infty$ there exist constants $C_{t,p},C'_{t,p}>0$ such that
		\begin{equation}
			C'_{t,p}N^{t-n/p}\leq \|\Psi_N\|_{W^{t,p}(\R^n)}\leq C_{t,p}N^{t-n/p}
		\end{equation}
		for all $N\in\N$.

		Finally, we introduce for $N\in\N$ the rescaled functions $\Phi_N$ by
		\begin{equation}
			\label{eq: rescaled functions}
			\Phi_N\vcentcolon =\frac{\Psi_N}{[\Psi_N]_{W^{s,p}(\R^n)}}\in C_c^{\infty}(Q_{1/N}).
		\end{equation}
		We clearly have $[\Phi_N]_{W^{s,p}(\R^n)}=1$ and $\supp(\Phi_N)\to \{0\}$. Moreover, from \eqref{eq: Lp norm} and \eqref{eq: Wtp norm} we deduce that
		\[
		\begin{split}
			\|\Phi_N\|_{L^p(\R^n)}&=\frac{\|\Psi_N\|_{L^p(\R^n)}}{[\Psi_N]_{W^{s,p}(\R^n)}}=\frac{N^{-n/p}\|\Psi\|_{L^p(\R^n)}}{N^{s-n/p}[\Psi]_{W^{s,p}(\R^n)}}=N^{-s}\frac{\|\Psi\|_{L^p(\R^n)}}{[\Psi]_{W^{s,p}(\R^n)}}\longrightarrow 0
		\end{split}
		\]
		and
		\[
		\begin{split}
			[\Phi_N]_{W^{t,p}(\R^n)}&=\frac{[\Psi_N]_{W^{t,p}(\R^n)}}{[\Psi_N]_{W^{s,p}(\R^n)}}=\frac{N^{t-n/p}[\Psi]_{W^{t,p}(\R^n)}}{N^{s-n/p}[\Psi]_{W^{s,p}(\R^n)}}=N^{t-s}\frac{[\Psi]_{W^{t,p}(\R^n)}}{[\Psi]_{W^{s,p}(\R^n)}}\longrightarrow 0
		\end{split}
		\]
		as $N\to\infty$, when $0<t<s$. Hence, the sequence $(\Phi_N)_{N\in\N}$ satisfies the properties \ref{prop 1 of exterior cond} -- \ref{prop 3 of exterior cond}.
	\end{proof}
	
	\begin{lemma}
		\label{lem: energy concentration property}
		(Energy concentration property) Let $\Omega \subset \mathbb{R}^n$ be a bounded open set, $0<s<1$, $1< p<\infty$ and $x_0\in W\Subset \Omega_e$. Assume that $(\Phi_N)_{N\in\N}\subset C_c^{\infty}(W)$ is a sequence satisfying the properties \ref{prop 1 of exterior cond} -- \ref{prop 3 of exterior cond} of Lemma~\ref{lem: exterior conditions}. If $\sigma\colon \R^n\times \R^n\to \R$ satisfies the uniform ellipticity condition \eqref{eq: uniform ellipticity cond}, $(\sigma(x,\cdot)\colon W\to \R)_{x\in\R^n}$ is equicontinuous at $x_0$ and $\sigma(\cdot,x_0)\in C(W)$, then we have
		$$
		\sigma\left(x_0,x_0\right)=\lim _{N \rightarrow \infty} E_{s,p,\sigma}\left(\Phi_N\right).
		$$
	\end{lemma}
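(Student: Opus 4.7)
The plan is to exploit the normalization $[\Phi_N]_{W^{s,p}(\R^n)}^p = 1$ from property~(\ref{prop 1 of exterior cond}) of Lemma~\ref{lem: exterior conditions} to rewrite
\[
E_{s,p,\sigma}(\Phi_N) - \sigma(x_0, x_0) = \int_{\R^{2n}} (\sigma(x,y) - \sigma(x_0, x_0)) \frac{|\Phi_N(x) - \Phi_N(y)|^p}{|x-y|^{n+sp}} \, dxdy,
\]
and then to show that the right-hand side tends to zero. The strategy is to split $\R^{2n}$ into a small symmetric neighborhood $R_1 = B_\delta(x_0) \times B_\delta(x_0)$ of the diagonal point $(x_0, x_0)$ (for some $\delta$ with $B_{2\delta}(x_0) \Subset W$) and its complement $R_2$, and to bound each piece by a different mechanism.

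On $R_1$ I will control the oscillation of $\sigma$ by the decomposition $\sigma(x,y) - \sigma(x_0, x_0) = [\sigma(x,y) - \sigma(x, x_0)] + [\sigma(x, x_0) - \sigma(x_0, x_0)]$: the first bracket is small uniformly in $x\in\R^n$ for $y$ close to $x_0$ by the equicontinuity hypothesis on the family $(\sigma(x, \cdot))_{x \in \R^n}$ at $x_0$, while the second is small for $x$ close to $x_0$ by continuity of $\sigma(\cdot, x_0)\in C(W)$. Given $\eps > 0$, choosing $\delta > 0$ small enough therefore bounds $|\sigma(x,y)-\sigma(x_0,x_0)|\leq 2\eps$ on $R_1$, and hence the $R_1$-contribution by $2\eps \cdot [\Phi_N]^p_{W^{s,p}(\R^n)} = 2\eps$.

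On $R_2$ I will use property~(\ref{prop 3 of exterior cond}) to ensure $\supp(\Phi_N) \subset B_{\delta/2}(x_0)$ for $N$ large. The integrand vanishes unless at least one of $x, y$ lies in $\supp(\Phi_N)$; combined with $(x,y) \notin B_\delta(x_0)^2$, this forces exactly one of the two points to lie in $B_{\delta/2}(x_0)$ while the other lies in $B_\delta(x_0)^c$. In this regime $|x-y| \geq \delta/2$ and one of the two values $\Phi_N(x), \Phi_N(y)$ vanishes, so using the ellipticity bound $|\sigma(x,y) - \sigma(x_0,x_0)| \leq 2\lambda^{-1}$ and the tail estimate $\int_{|y-x_0|\geq \delta} |x-y|^{-n-sp}\,dy \leq C(n,s,p)\delta^{-sp}$ (valid for $|x-x_0| < \delta/2$ by the triangle inequality $|x-y|\geq |y-x_0|/2$), the $R_2$-contribution is majorized by $C\delta^{-sp}\|\Phi_N\|_{L^p(\R^n)}^p$, which vanishes as $N\to\infty$ by property~(\ref{prop 2 of exterior cond}) applied with $t = 0$.

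The main technical subtlety will be the order of quantifiers: the prefactor $C\delta^{-sp}$ blows up as $\delta \to 0$, so one must \emph{first} fix $\delta$ small (which makes the $R_1$-contribution at most $2\eps$) and only \emph{afterwards} choose $N$ large enough, depending on this $\delta$, so that both $\supp(\Phi_N) \subset B_{\delta/2}(x_0)$ and $C\delta^{-sp}\|\Phi_N\|_{L^p(\R^n)}^p < \eps$ hold simultaneously. This would yield $|E_{s,p,\sigma}(\Phi_N) - \sigma(x_0,x_0)| \leq 3\eps$ for all sufficiently large $N$, and since $\eps$ was arbitrary, the claimed limit follows.
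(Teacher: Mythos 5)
Your argument is correct, and it arrives at the same goal as the paper but through a noticeably more direct implementation. Both proofs rest on the same two ideas: (a) the decomposition $\sigma(x,y)-\sigma(x_0,x_0) = [\sigma(x,y)-\sigma(x,x_0)] + [\sigma(x,x_0)-\sigma(x_0,x_0)]$, controlled respectively by the equicontinuity hypothesis and the continuity of $\sigma(\cdot,x_0)$, and (b) the fact that $\|\Phi_N\|_{L^p(\R^n)}\to 0$ kills the contribution coming from far away from $(x_0,x_0)$. Where you differ is in the mechanism used to isolate the near-diagonal region. The paper inserts a fixed cutoff $\eta_M$ with $\eta_M\Phi_N=\Phi_N$ for $N$ large, applies the fractional product rule $|d_s(\eta_M\Phi_N)|^p\lesssim |\Phi_N(x)|^p|d_s\eta_M|^p + |\eta_M(y)|^p|d_s\Phi_N|^p$, shows that the first term is $\lesssim \|\eta_M\|_{C^1}^p\|\Phi_N\|^p_{L^p}\to 0$, and uses the support of $\eta_M$ to localize one variable at a time (sending $N\to\infty$ first and $M\to\infty$ second). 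You instead split the domain $\R^{2n}$ geometrically into $B_\delta(x_0)^2$ and its complement, observe that on the complement at most one of $\Phi_N(x),\Phi_N(y)$ is nonzero and $|x-y|\gtrsim\delta$, and bound that piece by $C\delta^{-sp}\|\Phi_N\|^p_{L^p}$. Your route avoids the fractional product rule and the auxiliary computation $\int_{\R^n}|d_s\psi(x,y)|^p\,dy/|x-y|^n\lesssim\|\psi\|_{C^1}^p$ entirely; it is more elementary and arguably more transparent about the quantifier ordering (fix $\delta$ first, then send $N\to\infty$), which you correctly flag as the delicate point. The paper's cutoff-plus-product-rule approach is slightly more modular and is the kind of argument that generalizes readily when one needs to commute cutoffs with nonlocal operators, but for this lemma both routes land cleanly.
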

	\begin{remark}
		We remark that if $\sigma(x,y)=\alpha(x)\beta(y)$ for some uniformly elliptic functions $\alpha,\beta\in L^{\infty}(\R^n)\cap C(W)$ then $\sigma$ satisfies the assumptions of Lemma~\ref{lem: energy concentration property}. 
	\end{remark}

	\begin{proof}[Proof of Lemma \ref{lem: energy concentration property}]
		First note that we can decompose $\sigma(x,y)$ as
		\begin{equation}
			\label{eq: decomposition of kernel}
			\sigma(x,y)=(\sigma(x,y)-\sigma(x,x_0))+(\sigma(x,x_0)-\sigma(x_0,x_0))+\sigma(x_0,x_0), \text{ for all }x,y\in\R^n.
		\end{equation}
		This implies
		\begin{equation}
			\label{eq: decomposition of energy}
			\begin{split}
				E_{s,p,\sigma}(\Phi_N)=&\int_{\R^{2n}}(\sigma(x,y)-\sigma(x,x_0))|d_s\Phi_N|^p\frac{dxdy}{|x-y|^n}\\
				&+\int_{\R^{2n}}(\sigma(x,x_0)-\sigma(x_0,x_0))|d_s\Phi_N|^p\frac{dxdy}{|x-y|^n}\\
				&+\sigma(x_0,x_0)\int_{\R^{2n}}|d_s\Phi_N|^p\frac{dxdy}{|x-y|^n}.
			\end{split}
		\end{equation}
		By \ref{prop 1 of exterior cond} of Lemma~\ref{lem: exterior conditions}, it follows that the last term is equal to $\sigma(x_0,x_0)$. Hence, to establish the assertion it suffice to prove that the two remaining integrals go to zero as $N\to\infty$.

		Next observe that by the product rule for the fractional gradient \eqref{eq: product rule},
		\[\left|d_s\left(\Phi_N \psi\right)\right|^p \leq C\left(\left|\Phi_N(x)\right|^p\left|d_s \psi(x, y)\right|^p+|\psi(y)|^p\left|d_s \Phi_N(x, y)\right|^p\right).
		\]
		If $\psi \in C_b^1\left(\mathbb{R}^n\right)$, then the mean value theorem yields that
		$$
		\begin{aligned}
			&\int_{\mathbb{R}^n} \left|d_s \psi(x, y)\right|^p \frac{d y}{|x-y|^n} \\
			\lesssim &\int_{B_1(x)} \frac{|\psi(x)-\psi(y)|^p}{|x-y|^{n+ sp}} d y+\int_{\mathbb{R}^n \backslash B_1(x)} \frac{|\psi(x)-\psi(y)|^p}{|x-y|^{n+sp}} d y \\
			\lesssim&\|\nabla \psi\|_{L^{\infty}\left(\mathbb{R}^n\right)}^p \int_{B_1(x)} \frac{d y}{|x-y|^{n+p(s-1)}}+\|\psi\|_{L^{\infty}\left(\mathbb{R}^n\right)}^p \int_{\mathbb{R}^n \backslash B_1(x)} \frac{d y}{|x-y|^{n+ sp}} \\
			\lesssim&\left(\int_{B_1(0)} \frac{d z}{|z|^{n+p(s-1)}}+\int_{\mathbb{R}^n \backslash B_1(0)} \frac{d z}{|z|^{n+ps}}\right)\|\psi\|_{C^1\left(\mathbb{R}^n\right)}^p  \\ \lesssim &\|\psi\|_{C^1\left(\mathbb{R}^n\right)}^p,
		\end{aligned}
		$$
		for all $x \in \mathbb{R}^n$. By \ref{prop 2 of exterior cond} of  Lemma~\ref{lem: exterior conditions}, we have $\left\|\Phi_N\right\|_{L^p\left(\mathbb{R}^n\right)} \rightarrow 0$ as $N \rightarrow \infty$, and thus for all $\psi \in C_b^1\left(\mathbb{R}^n\right)$ there holds 
		\begin{equation}
			\label{eq: limit vanishs}
			\int_{\mathbb{R}^{2 n}}\left|\Phi_N(x)\right|^p\left|d_s \psi(x, y)\right|^p \frac{d x d y}{|x-y|^n} \lesssim\|\psi\|_{C^1\left(\mathbb{R}^n\right)}^p\left\|\Phi_N\right\|_{L^p\left(\mathbb{R}^n\right)}^p \rightarrow 0
		\end{equation}
		as $N \rightarrow \infty$.
		Consider now a sequence of functions $\left(\eta_M\right)_{M \in \mathbb{N}} \subset C_c^1\left(\mathbb{R}^n\right)$ such that for all $M \in \mathbb{N}$ it holds $0 \leq \eta_M \leq 1$,
		$\left.\eta_M\right|_{Q_{1 / 2 M}\left(x_0\right)}=1$ and $\left.\eta_M\right|_{\left(Q_{1 / M}\left(x_0\right)\right)^c}=0$. If $N\in\N$ is sufficiently large, then $\eta_M\Phi_N=\Phi_N$ and hence by using \eqref{eq: limit vanishs}, we deduce
		\[
		\begin{split}
			&\limsup_{N\to\infty}\left|\int_{\R^{2n}}(\sigma(x,y)-\sigma(x,x_0))|d_s\Phi_N|^p\frac{dxdy}{|x-y|^n}\right|\\
			=&\limsup_{N\to\infty}\left|\int_{\R^{2n}}(\sigma(x,y)-\sigma(x,x_0))|d_s(\eta_M\Phi_N)|^p\frac{dxdy}{|x-y|^n}\right|\\
			\lesssim& \limsup_{N\to\infty}\left|\int_{\R^{2n}}(\sigma(x,y)-\sigma(x,x_0))|\eta_M(y)|^p|d_s\Phi_N|^p\frac{dxdy}{|x-y|^n}\right|\\
			\lesssim & \sup_{y\in Q_{1/M}(x_0)}\sup_{x\in\R^n}|\sigma(x,y)-\sigma(x,x_0)|
		\end{split}
		\]
		for all $M\in\N$, where we used \ref{prop 1 of exterior cond} of  Lemma~\ref{lem: exterior conditions} in the last step. Using the equicontinuity assumption on $\sigma$, we deduce that 
		\[
		\limsup_{N\to\infty}\left|\int_{\R^{2n}}(\sigma(x,y)-\sigma(x,x_0))|d_s\Phi_N|^p\frac{dxdy}{|x-y|^n}\right|=0.
		\]
		Similarly, we have
		\[
		\begin{split}
			&\limsup_{N\to\infty}\left|\int_{\R^{2n}}(\sigma(x,x_0)-\sigma(x_0,x_0))|d_s\Phi_N|^p\frac{dxdy}{|x-y|^n}\right|\\
			=&\limsup_{N\to\infty}\left|\int_{\R^{2n}}(\sigma(x,x_0)-\sigma(x_0,x_0))|d_s(\eta_M\Phi_N)|^p\frac{dxdy}{|x-y|^n}\right|\\
			\lesssim& \limsup_{N\to\infty}\left|\int_{\R^{2n}}(\sigma(x,x_0)-\sigma(x_0,x_0))|\eta_M(x)|^p|d_s\Phi_N|^p\frac{dxdy}{|x-y|^n}\right|\\
			\lesssim &\sup_{x\in Q_{1/M}(x_0)}|\sigma(x,x_0)-\sigma(x_0,x_0)|
		\end{split}
		\]
		for all $M\in\N$. By the continuity of $x\mapsto \sigma(x,x_0)$ it follows that the last term vanishes as $M\to\infty$. Hence, taking the limit $N\to\infty$ in \eqref{eq: decomposition of energy}, we obtain
		\[
		\sigma(x_0,x_0)=\lim_{N\to\infty}E_{s,p,\sigma}(\Phi_N).
		\]
		This completes the proof.
	\end{proof} 
	
	Now, we observe that if $u_N$ denotes the unique solution of
	\begin{equation}
		\begin{cases}
			\Div_s(\sigma|d_su|^{p-2}d_su)=0& \text{ in }\Omega,\\
			u=\Phi_N\ &\text{ in } \Omega_e,
		\end{cases}
	\end{equation} 
	then by writing $u_N=(u_N-\Phi_N)+\Phi_N\in \widetilde{W}^{s,p}(\Omega)+W^{s,p}(\R^n)$ there holds
	\[
	\int_{\R^{2n}} \sigma\left|d_s u_N\right|^p \frac{dxdy}{|x-y|^n}
	= \int_{\R^{2n}} \sigma \left|d_s u_N\right|^{p-2} d_s u_N \, d_s \Phi_N \frac{dxdy}{|x-y|^n}.
	\]
	Thus, by Proposition~\ref{prop: DN maps} we obtain
	\begin{align*}
		\langle \Lambda_{\sigma}\Phi_N,\Phi_N\rangle 
		= \int_{\R^{2n}} \sigma\left|d_s u_N\right|^p \frac{dxdy}{|x-y|^n} 
		= \int_{\R^{2n}} \sigma\left|d_s u_N\right|^{p-2}d_su_N \,d_s\Phi_N \frac{dxdy}{|x-y|^n},
	\end{align*}
	so that 
	\begin{equation}\label{DN_map_representation}
		\begin{aligned}
			&\langle \Lambda_{\sigma}\Phi_N,\Phi_N\rangle \\
			=& \int_{\R^{2n}} \sigma\left|d_s \Phi_N\right|^p \frac{dxdy}{|x-y|^n} \\
			&+\int_{\R^{2n}} \sigma\left(\left|d_s u_N\right|^{p-2} d_s u_N - |d_s \Phi_N|^{p-2} d_s \Phi_N\right) \, d_s \Phi_N \frac{dxdy}{|x-y|^n} \\
			=& E_{s,p,\sigma}(\Phi_N)\\
			& +\int_{\R^{2n}} \sigma\left(\left|d_s u_N\right|^{p-2} d_s u_N - |d_s \Phi_N|^{p-2} d_s \Phi_N\right) \, d_s \Phi_N \frac{dxdy}{|x-y|^n}. 
		\end{aligned}
	\end{equation}
	We will recover the value of $\sigma$ at $(x_0,x_0)$ by showing that the second term goes to zero as $N \rightarrow \infty $. For this purpose, we show next:
	
	\begin{lemma}\label{Correction_term_estimate}
		Let $\Omega \subset \mathbb{R}^n$ be a bounded open set, $0<s<1$, $1< p<\infty$ and $x_0\in W\Subset \Omega_e$. Assume that $(\Phi_N)_{N\in\N}\subset C_c^{\infty}(W)$ is a sequence satisfying the properties \ref{prop 1 of exterior cond} -- \ref{prop 3 of exterior cond} of Lemma~\ref{lem: exterior conditions}. Let $\sigma\colon \R^n\times \R^n\to \R$ satisfy the uniform elliptic condition \eqref{eq: uniform ellipticity cond}. Then we have
		\begin{equation}
			\label{eq: limit exterior cond and solutions}
			\| u_N -  \Phi_N\|_{W^{s,p}(\mathbb{R}^n)} \rightarrow 0 
		\end{equation}
		when $N\to\infty$, where $u_N\in W^{s,p}(\R^n)$ is the unique solution to 
		\begin{equation}
			\label{eq: solutions corresponding to exterior values}
			\begin{cases}
				\Div_s(\sigma|d_su|^{p-2}d_su)=0 &\text{ in } \Omega,\\
				u=\Phi_N & \text{ in }\Omega_e.
			\end{cases}
		\end{equation} 
	\end{lemma}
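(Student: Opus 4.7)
The plan is to exploit the strong monotonicity of the weighted fractional $p\,$-Laplacian provided by Lemma~\ref{lemma: Auxiliary lemma}, together with the fact that $u_N-\Phi_N\in\widetilde{W}^{s,p}(\Omega)$ is an admissible test function and that $\supp(\Phi_N)\subset W\Subset\Omega_e$ lies at positive distance from $\overline{\Omega}$ for all sufficiently large $N$. First, using Remark~\ref{rem: larger test space}, I would test the weak form of \eqref{eq: solutions corresponding to exterior values} with $\varphi=u_N-\Phi_N$ to obtain
\[
\int_{\R^{2n}}\sigma\,|d_su_N|^{p-2}d_su_N\cdot d_s(u_N-\Phi_N)\,\frac{dxdy}{|x-y|^n}=0.
\]
Setting
\[
R_N\vcentcolon =\int_{\R^{2n}}\sigma\bigl[|d_su_N|^{p-2}d_su_N-|d_s\Phi_N|^{p-2}d_s\Phi_N\bigr]\,d_s(u_N-\Phi_N)\,\frac{dxdy}{|x-y|^n}
\]
and expanding the product, the two terms containing $|d_su_N|^{p-2}d_su_N$ cancel by the tested identity, leaving
\[
R_N = -\int_{\R^{2n}}\sigma\,|d_s\Phi_N|^{p-2}d_s\Phi_N\cdot d_s(u_N-\Phi_N)\,\frac{dxdy}{|x-y|^n}.
\]

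The second step is to show $R_N\to 0$ using the support structure. Since $\Phi_N\equiv 0$ on $\Omega$ and $u_N-\Phi_N\equiv 0$ on $\Omega_e$, the integrand is nonzero only where exactly one variable lies in $\Omega$ and the other in $\supp(\Phi_N)\subset W$; by \ref{prop 3 of exterior cond} of Lemma~\ref{lem: exterior conditions}, on this set $|x-y|\geq\delta_0>0$ uniformly in $N$ large. Evaluating the integrand explicitly and using the symmetry of the integrand in $(x,y)$ yields
\[
R_N = 2\int_{\Omega}\int_{\Omega_e}\sigma(x,y)\,\frac{|\Phi_N(y)|^{p-2}\Phi_N(y)\,u_N(x)}{|x-y|^{n+sp}}\,dxdy.
\]
Hölder's inequality with conjugate exponents $p'=p/(p-1)$ and $p$ then gives $|R_N|\leq 2\lambda^{-1}A_N^{(p-1)/p}B_N^{1/p}$ where
\[
A_N\vcentcolon =\int_{\Omega}\!\int_{\Omega_e}\frac{|\Phi_N(y)|^p}{|x-y|^{n+sp}}dxdy,\qquad B_N\vcentcolon =\int_{\Omega}\!\int_{\Omega_e}\frac{|u_N(x)|^p}{|x-y|^{n+sp}}dxdy.
\]
The distance bound yields $A_N\leq \delta_0^{-(n+sp)}|\Omega|\,\|\Phi_N\|_{L^p(\R^n)}^p\to 0$ by \ref{prop 2 of exterior cond} of Lemma~\ref{lem: exterior conditions} with $t=0$. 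Since $u_N-\Phi_N$ vanishes on $\Omega_e$, one has $[u_N-\Phi_N]_{W^{s,p}(\R^n)}^p = [u_N]_{W^{s,p}(\Omega)}^p + 2B_N$, and the a priori bound \eqref{eq: estimate of solutions} gives $[u_N]_{W^{s,p}(\R^n)}\leq C[\Phi_N]_{W^{s,p}(\R^n)}=C$, so $B_N$ is uniformly bounded. Hence $R_N\to 0$.

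The final step is to convert $R_N\to 0$ into strong convergence. For $p\geq 2$, the pointwise inequality \eqref{eq: superquad auxiliary} applied to $d_su_N,d_s\Phi_N$ gives at once $\lambda c_p\,[u_N-\Phi_N]_{W^{s,p}(\R^n)}^p\leq R_N\to 0$. For $1<p<2$, following case (ii) of the proof of Theorem~\ref{thm: well-posedness}, inequality \eqref{eq: subquad auxiliary} combined with Hölder yields
\[
\lambda^{p/2}c_p^{p/2}[u_N-\Phi_N]_{W^{s,p}(\R^n)}^p \leq 2^{p-1}c_p^{p/2}\bigl([u_N]^p+[\Phi_N]^p\bigr)^{(2-p)/2}R_N^{p/2},
\]
and uniform boundedness of $[u_N]$, $[\Phi_N]$ together with $R_N\to 0$ force $[u_N-\Phi_N]_{W^{s,p}(\R^n)}\to 0$. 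In either case, the Poincaré inequality (Proposition~\ref{thm: Poincare}) applied to $u_N-\Phi_N\in\widetilde{W}^{s,p}(\Omega)$ upgrades this to $\|u_N-\Phi_N\|_{W^{s,p}(\R^n)}\to 0$. I expect the main obstacle to be the subquadratic case $1<p<2$, where the weaker monotonicity requires absorbing the geometric factor $\bigl([u_N]^p+[\Phi_N]^p\bigr)^{(2-p)/2}$ through the well-posedness estimate \eqref{eq: estimate of solutions}.
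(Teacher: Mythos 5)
Your proof is correct and follows essentially the same strategy as the paper: test the equation with $u_N-\Phi_N$, exploit the disjoint supports of $\Phi_N$ and $\widetilde{\Phi}_N:=u_N-\Phi_N$ together with the positive distance $d=\dist(W,\Omega)>0$, and conclude via the strong monotonicity inequalities \eqref{eq: superquad auxiliary} and \eqref{eq: subquad auxiliary} of Lemma~\ref{lemma: Auxiliary lemma}. Where you diverge is the final bounding step. You show that $B_N$ is uniformly bounded by the clean observation that $2B_N$ is the $\Omega\times\Omega^c$ portion of $[u_N-\Phi_N]_{W^{s,p}(\R^n)}^p$, which is controlled by the a priori estimate \eqref{eq: estimate of solutions} and the normalization $[\Phi_N]_{W^{s,p}(\R^n)}=1$; this yields $R_N\to 0$ as a freestanding intermediate statement, and monotonicity then kills $[u_N-\Phi_N]_{W^{s,p}(\R^n)}$. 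The paper instead applies monotonicity first and absorbs the resulting $\|\widetilde{\Phi}_N\|_{L^p(\Omega)}$ term via Young's inequality and the Poincar\'e inequality (Proposition~\ref{thm: Poincare}). Both routes are valid; yours trades the absorption trick for a direct appeal to well-posedness, which is arguably tidier since it isolates $R_N\to 0$ as a stand-alone claim.

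One technical slip: the identity
\begin{equation}
R_N = 2\int_{\Omega}\int_{\Omega_e}\sigma(x,y)\,\frac{|\Phi_N(y)|^{p-2}\Phi_N(y)\,u_N(x)}{|x-y|^{n+sp}}\,dxdy
\end{equation}
invoked ``by symmetry of the integrand'' requires $\sigma(x,y)=\sigma(y,x)$, which the paper does not assume (it only assumes the bounds \eqref{eq: uniform ellipticity cond}). Without symmetry you obtain a sum of two cross terms, in which the second has $\sigma(x,y)$ with $x\in\supp\Phi_N$ and $y\in\Omega$ (these are the paper's $I_1$ and $I_2$). Each is bounded by the same H\"older--Young argument using $\|\sigma\|_{L^\infty}\leq\lambda^{-1}$, so the conclusion is unaffected, but the factor $2$ and the symmetry claim should be replaced by the two-term sum.
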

	
	\begin{proof}
		We divide the proof into the following two cases:
		
		\begin{itemize}
			\item[(i)] For $2\leq p<\infty$:
			
			By the strong monotonicity property \eqref{eq: strong mon prop superquad}, there holds
			\[
			\begin{split}
				[u_N-\Phi_N]_{W^{s,p}(\R^n)}^p\leq& C\int_{\R^{2n}}\sigma(|\delta_{x,y}u_N|^{p-2}\delta_{x,y}u_N \\
				&\quad -|\delta_{x,y}\Phi_N|^{p-2}\delta_{x,y}\Phi_N)(\delta_{x,y}u_N-\delta_{x,y}\Phi_N)\,\frac{dxdy}{|x-y|^{n+sp}},
			\end{split}
			\]
			for all $N\in\N$ and some $C>0$ only depending on $n,p,\lambda$. Next let us introduce the auxiliary function $\widetilde{\Phi}_N\vcentcolon = u_N-\Phi_N\in \widetilde{W}^{s,p}(\Omega)$. Hence using that $u_N$ solves \eqref{eq: solutions corresponding to exterior values} as well as that $\Phi_N\in C_c^{\infty}(W)$ and $\widetilde{\Phi}_N$ have disjoint supports, we get
			\begin{align*}
				\begin{split}
					&[u_N-\Phi_N]_{W^{s,p}(\R^n)}^p\\
					\leq & C\int_{\R^{2n}}\sigma\left(|\delta_{x,y}u_N|^{p-2}\delta_{x,y}u_N-|\delta_{x,y}\Phi_N|^{p-2}\delta_{x,y}\Phi_N\right)\delta_{x,y}\widetilde{\Phi}_N\,\frac{dxdy}{|x-y|^{n+sp}}\\
					=&\underbrace{-C\int_{\R^{2n}}\sigma|\delta_{x,y}\Phi_N|^{p-2}\delta_{x,y}\Phi_N\delta_{x,y}\widetilde{\Phi}_N\,\frac{dxdy}{|x-y|^{n+sp}}}_{\text{since }u_N \text{ solves \eqref{eq: solutions corresponding to exterior values}}  }\\
					=&-C\int_{\R^{2n}}\sigma|\delta_{x,y}\Phi_N|^{p-2}\left(\Phi_N(x)\widetilde{\Phi}_N(x)+\Phi_N(y)\widetilde{\Phi}_N(y)\right.\\
					&\qquad \qquad \left.-\Phi_N(x)\widetilde{\Phi}_N(y)-\Phi_N(y)\widetilde{\Phi}_N(x)\right)\,\frac{dxdy}{|x-y|^{n+sp}}\\
					=&C\int_{\R^{2n}}\sigma|\delta_{x,y}\Phi_N|^{p-2}\left(\Phi_N(x)\widetilde{\Phi}_N(y)+\Phi_N(y)\widetilde{\Phi}_N(x)\right)\,\frac{dxdy}{|x-y|^{n+sp}}\\
					\vcentcolon= &C(I_1+I_2),
				\end{split}
			\end{align*}
			where 
			\begin{align}
				I_1:=& \int_{\R^{2n}}\sigma|\delta_{x,y}\Phi_N|^{p-2}\Phi_N(x)\widetilde{\Phi}_N(y)\,\frac{dxdy}{|x-y|^{n+sp}},\\
				I_2:=& \int_{\R^{2n}}\sigma|\delta_{x,y}\Phi_N|^{p-2}\Phi_N(y)\widetilde{\Phi}_N(x)\,\frac{dxdy}{|x-y|^{n+sp}}.
			\end{align}

			By H\"older's inequality, the convexity of $x\mapsto |x|^q$ for $q>1$, and Minkowski's inequality, we obtain
			\[
			\begin{split}
				|I_1|&=\left|\int_{\R^{2n}}\sigma|\delta_{x,y}\Phi_N|^{p-2}\Phi_N(x)\widetilde{\Phi}_N(y)\,\frac{dxdy}{|x-y|^{n+sp}}\right|\\
				&\leq C\int_{W}\int_{\Omega}\sigma\left(|\Phi_N(x)|^{p-2}+|\Phi_N(y)|^{p-2}\right)\,|\Phi_N(x)|\,|\widetilde{\Phi}_N(y)|\,\frac{dydx}{|x-y|^{n+sp}}\\
				&=C\|\sigma\|_{L^{\infty}(W\times\Omega)}\int_{W}\int_{\Omega}|\Phi_N(x)|^{p-1}\,|\widetilde{\Phi}_N(y)|\,\frac{dydx}{|x-y|^{n+sp}}\\
				&\leq C\|\sigma\|_{L^{\infty}(W\times\Omega)}\int_W|\Phi_N(x)|^{p-1}\left(\int_{\Omega}\frac{|\widetilde{\Phi}_N(y)|}{|x-y|^{n+sp}}dy\right)dx\\
				&\leq C\|\sigma\|_{L^{\infty}(W\times\Omega)}\||\Phi_N|^{p-1}\|_{L^{\frac{p}{p-1}}(W)}\left\|\int_{\Omega}\frac{|\widetilde{\Phi}_N(y)|}{|x-y|^{n+sp}}dy\right\|_{L^p(W)}\\
				&\leq C\|\sigma\|_{L^{\infty}(W\times\Omega)}\|\Phi_N\|_{L^p(W)}^{p-1}\int_{\Omega}|\widetilde{\Phi}_N(y)|\left(\int_{W}\frac{dx}{|x-y|^{(n+sp)p}}\right)^{1/p}\,dy\\
				&=C\|\sigma\|_{L^{\infty}(W\times\Omega)}\|\Phi_N\|_{L^{p}(W)}^{p-1}\int_{\Omega}|\widetilde{\Phi}_N(y)|\left(\int_{W}\frac{dx}{|x-y|^{(n+sp)p}}\right)^{1/p}\,dy.
			\end{split}
			\]
			Now define $d\vcentcolon = \dist(\Omega,W)>0$. Hence,  H\"older's and Young's inequality imply
			\[
			\begin{split}
				|I_1|&\leq C\|\sigma\|_{L^{\infty}(W\times\Omega)}\|\Phi_N\|_{L^{p}(W)}^{p-1}|\Omega|^{\frac{p-1}{p}}\|\widetilde{\Phi}_N\|_{L^p(\Omega)}\frac{|W|^{1/p}}{d^{n+sp}}\\
				&\leq \eps\|\widetilde{\Phi}_N\|_{L^p(\Omega)}^p+C_{\eps}\|\sigma\|^{\frac{p}{p-1}}_{L^{\infty}(W\times\Omega)}\|\Phi_N\|_{L^{p}(W)}^p|\Omega|\frac{|W|^{\frac{1}{p-1}}}{d^{\frac{p(n+sp)}{p-1}}},
			\end{split}
			\]
			for all $\eps>0$. The same estimate holds for $I_2$ after replacing $\|\sigma\|_{L^{\infty}(W\times \Omega)}$ by $\|\sigma\|_{L^{\infty}(\Omega\times W)}$. Hence, if we choose $\eps>0$ sufficiently small, then by using Poincar\'e's inequality and recalling that $\widetilde{\Phi}_N=u_N-\Phi_N$, the first term on the right hand side can be absorbed on the left hand side to obtain
			\[
			\|u_N-\Phi_N\|^p_{W^{s,p}(\R^n)}\leq C_{\eps}\|\sigma\|^{\frac{p}{p-1}}_{L^{\infty}((W\cup\Omega)\times(W\cup\Omega) )}\|\Phi_N\|_{L^{p}(W)}^p|\Omega|\frac{|W|^{\frac{1}{p-1}}}{d^{\frac{p(n+sp)}{p-1}}}.
			\]
			Now this expression goes to zero as $N$ goes to $\infty$ by Lemma~\ref{lem: exterior conditions}, \ref{prop 2 of exterior cond}.
			
			\item[(ii)] For $1<p<2$:
			
			Applying this time the strong monotonicity property \eqref{eq: strong mon prop subquad} on $u_N-\Phi_N$ gives
			\[
			\begin{split}
				&[u_N-\Phi_N]_{W^{s,p}(\R^n)}^p \\
				\leq & C([u_N]_{W^{s,p}(\R^n)}^p+[\Phi_N]_{W^{s,p}(\R^n)}^p)^{\frac{2-p}{2}}\\
				&\quad\cdot \left(\int_{\R^{2n}}\sigma \left(|\delta_{x,y}u_N|^{p-2}\delta_{x,y}u_N \right.\right. \\
				& \qquad  \quad \left.\left.-|\delta_{x,y}\Phi_N|^{p-2}\delta_{x,y}\Phi_N\right) \left(\delta_{x,y}u_N-\delta_{x,y}\Phi_N\right) \frac{dxdy}{|x-y|^{n+sp}}\right)^{p/2},
			\end{split}
			\]
			for some $C>0$ only depending on $n,p$ and $\lambda$. As in the previous case, this implies
			\[
			\begin{split}
				&[u_N-\Phi_N]_{W^{s,p}(\R^n)}^p\\
				\leq &
				C\|\sigma\|_{L^{\infty}((W\cup\Omega)\times (\Omega\cup W))} \left(\int_{\R^{2n}} |\delta_{x,y}\Phi_N|^{p-2}(|\Phi_N(x)|\,|\widetilde{\Phi}_N(y)| \right. \\
				& \qquad \quad \left. +|\Phi_N(y)|\,|\widetilde{\Phi}_N(x)|) \frac{dxdy}{|x-y|^{n+sp}}\right)^{p/2}  \\
				\leq & C\|\sigma\|_{L^{\infty}((W\cup\Omega)\times (\Omega\cup W))}\left(\int_{\Omega}\int_{W} |\delta_{x,y}\Phi_N|^{p-2}|\Phi_N(x)|\,|\widetilde{\Phi}_N(y)| \frac{dxdy}{|x-y|^{n+sp}}\right)^{p/2}\\
				=&C\|\sigma\|_{L^{\infty}((W\cup\Omega)\times (\Omega\cup W))}\left(\int_{\Omega}\int_{W} |\Phi_N(x)|^{p-1}|\widetilde{\Phi}_N(y)| \frac{dxdy}{|x-y|^{n+sp}}\right)^{p/2},
			\end{split}
			\]
			where we again set $\widetilde{\Phi}_N=u_N-\Phi_N$. Here we used that $u_N$ solves \eqref{eq: solutions corresponding to exterior values}, $\sigma$ is uniformly elliptic, $[\Phi_N]_{W^{s,p}(\R^n)}= 1$ and $\Phi_N$, $\widetilde{\Phi}_N$ have disjoint supports. As in the previous case, this integral can be bounded from above as
			\[
			\begin{split}
				[u_N-\Phi_N]_{W^{s,p}(\R^n)}^p\leq &C\|\sigma\|_{L^{\infty}((W\cup\Omega)\times (\Omega\cup W))}\\
				&\quad \cdot \left(\|\Phi_N\|_{L^{p}(W)}^{p-1}|\Omega|^{\frac{p-1}{p}}\|\widetilde{\Phi}_N\|_{L^p(\Omega)}\frac{|W|^{1/p}}{d^{n+sp}}\right)^{p/2}.
			\end{split}
			\]
			Applying Young's inequality in the from $ab\leq \eps a^2+C_{\eps}b^2$ gives
			\[
			\begin{split}
				[u_N-\Phi_N]_{W^{s,p}(\R^n)}^p\leq &\eps\|\widetilde{\Phi}_N\|_{L^p(\Omega)}^p\\
				& +C_{\eps}\|\sigma\|_{L^{\infty}((W\cup\Omega)\times (\Omega\cup W))}^{2}\left(\|\Phi_N\|_{L^{p}(W)}^{p-1}|\Omega|^{\frac{p-1}{p}}\frac{|W|^{1/p}}{d^{n+sp}}\right)^p
			\end{split}
			\]
			for all $\eps>0$. Using Poincar\'e's inequality and choosing $\eps$ sufficiently small, we can absorb the first term on the left hand side to obtain
			\[
			\|u_N-\Phi_N\|_{W^{s,p}(\R^n)}^p\leq C_{\eps}\|\sigma\|_{L^{\infty}((W\cup\Omega)\times (\Omega\cup W))}^{2}\left(\|\Phi_N\|_{L^{p}(W)}^{p-1}|\Omega|^{\frac{p-1}{p}}\frac{|W|^{1/p}}{d^{n+sp}}\right)^p.
			\]
			By Lemma~\ref{lem: exterior conditions}, \ref{prop 2 of exterior cond} again, we deduce that $\|u_N-\Phi_N\|_{W^{s,p}(\R^N)}\to 0$ as $N\to\infty$.
		\end{itemize}
		This completes the proof.
	\end{proof}
	
	\begin{proposition}\label{Prop_correction_term}
		Suppose that the assumptions of Lemma~\ref{Correction_term_estimate} hold, then we have
		\[
		\lim_{N\to\infty} \int_{\R^{2n}} \sigma\left(\left|d_s u_N\right|^{p-2} d_s u_N - |d_s \Phi_N|^{p-2} d_s \Phi_N\right)  d_s \Phi_N \frac{dxdy}{|x-y|^n} = 0.
		\]
	\end{proposition}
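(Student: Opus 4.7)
The plan is to bound the integral in absolute value, use pointwise inequalities from Lemma~\ref{lemma: Auxiliary lemma} to separate the factor $|d_s u_N - d_s \Phi_N| = |d_s(u_N - \Phi_N)|$, and then apply H\"older's inequality so that the convergence $[u_N - \Phi_N]_{W^{s,p}(\R^n)} \to 0$ from Lemma~\ref{Correction_term_estimate} does the work. The estimate $[u_N]_{W^{s,p}(\R^n)} \leq C[\Phi_N]_{W^{s,p}(\R^n)} = C$ from Theorem~\ref{thm: well-posedness} together with $[\Phi_N]_{W^{s,p}(\R^n)} = 1$ will keep all remaining factors uniformly bounded.

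First I would use the uniform bound $\sigma \leq \lambda^{-1}$ to eliminate $\sigma$ and treat the two regimes separately. For $p \geq 2$, I would apply \eqref{eq: estimate mikko} of Lemma~\ref{lemma: Auxiliary lemma} pointwise with $\xi = d_s u_N(x,y)$ and $\eta = d_s \Phi_N(x,y)$, so that the integrand is dominated by
\[
C_p \lambda^{-1} \bigl(|d_s u_N| + |d_s \Phi_N|\bigr)^{p-2} \, |d_s(u_N - \Phi_N)| \, |d_s \Phi_N|.
\]
Then I would apply H\"older's inequality with exponents $p/(p-2), p, p$ on the measure $d\mu = \frac{dx\,dy}{|x-y|^n}$, noting that $\||d_s u|\|_{L^p(d\mu)} = [u]_{W^{s,p}(\R^n)}$. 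This yields an upper bound of the form
\[
C \bigl([u_N]_{W^{s,p}(\R^n)} + [\Phi_N]_{W^{s,p}(\R^n)}\bigr)^{p-2} \, [u_N - \Phi_N]_{W^{s,p}(\R^n)} \, [\Phi_N]_{W^{s,p}(\R^n)}.
\]
Lemma~\ref{Correction_term_estimate} drives the middle factor to zero while the first and third are bounded by the well-posedness estimate.

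For $1 < p < 2$, the bound from \eqref{eq: estimate mikko} is not directly useful because the factor $(|d_s u_N| + |d_s \Phi_N|)^{p-2}$ can be singular. Instead, I would use the well-known H\"older-type inequality
\[
\bigl||\xi|^{p-2}\xi - |\eta|^{p-2}\eta\bigr| \leq C_p |\xi - \eta|^{p-1} \quad \text{for all } \xi,\eta \in \R^n,
\]
which holds in this subquadratic range. This reduces the integrand pointwise to $C_p \lambda^{-1} |d_s(u_N - \Phi_N)|^{p-1} |d_s \Phi_N|$, and a single application of H\"older with exponents $p/(p-1)$ and $p$ gives the bound
\[
C \, [u_N - \Phi_N]_{W^{s,p}(\R^n)}^{p-1} \, [\Phi_N]_{W^{s,p}(\R^n)},
\]
which again tends to zero by Lemma~\ref{Correction_term_estimate}.

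The main conceptual obstacle is simply selecting the right pointwise inequality in each of the two regimes; once the correct one is in hand, the rest is a routine H\"older estimate calibrated to the fact that the natural $L^p(d\mu)$ norm of $d_s u$ coincides with the Gagliardo seminorm $[u]_{W^{s,p}(\R^n)}$. No new ingredient beyond Lemma~\ref{lemma: Auxiliary lemma}, Theorem~\ref{thm: well-posedness}, and Lemma~\ref{Correction_term_estimate} should be needed.
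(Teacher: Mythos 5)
Your proof is correct and takes essentially the same approach as the paper. The only cosmetic difference is in the subquadratic case $1<p<2$: the paper also begins from \eqref{eq: estimate mikko} and then notes that $p-2<0$ together with $|d_su_N-d_s\Phi_N|\leq|d_su_N|+|d_s\Phi_N|$ gives $(|d_su_N|+|d_s\Phi_N|)^{p-2}\leq|d_s(u_N-\Phi_N)|^{p-2}$, which yields exactly the pointwise bound $C_p|\xi-\eta|^{p-1}$ that you invoke directly; so the two presentations collapse to the identical estimate, followed by the same H\"older application and the same appeal to Lemma~\ref{Correction_term_estimate} and the uniform bound on $[u_N]_{W^{s,p}(\R^n)}$.
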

	
	\begin{proof}
		By the estimate \eqref{eq: estimate mikko} of Lemma~\ref{lemma: Auxiliary lemma}, we obtain 
		\[
		\begin{aligned}
			\left|\int_{\R^{2n}} \sigma\left(|d_s u_N|^{p-2} d_s u_N-\left|d_s \Phi_N\right|^{p-2} d_s \Phi_N\right)  d_s \Phi_N \frac{dxdy}{|x-y|^n}\right| \lesssim I,
		\end{aligned}
		\]
		where 
		\[
		I:= \|\sigma\|_{L^{\infty}(\R^{2n})} \int_{\R^{2n}}\left(|d_s u_N|+\left|d_s \Phi_N\right|\right)^{p-2}\left|d_s u_N-d_s \Phi_N\right|\left|d_s \Phi_N\right| \frac{dxdy}{|x-y|^n}.
		\]
		We divide the cases into $p\geq 2$ and $1<p<2$. 
		
		\begin{itemize}
			\item[(i)] For $p \geq 2$: The H\"older's inequality implies that
			\[
			\begin{split}
				I&\lesssim\left(\||d_s u_N|+|d_s \Phi_N|\|^{p-2}_{L^p(\R^{2n},|x-y|^{-n})}\right)\\
				&\qquad\cdot  \left\|d_s (u_N -  \Phi_N)\right\|_{L^p(\R^{2n},|x-y|^{-n})}\left\|d_s \Phi_N\right\|_{L^p(\R^{2n},|x-y|^{-n})}\\
				&\lesssim ([u_N]_{W^{s,p}(\R^n)}+[\Phi_N]_{W^{s,p}(\R^n)})^{p-2}[u_n-\Phi_N]_{W^{s,p}(\R^n)}[\Phi_N]_{W^{s,p}(\R^n)}.
			\end{split}
			\]
			Since, $u_N\in W^{s,p}(\R^n)$ minimizes the fractional $p\,$-Dirichlet energy and $[\Phi_N]_{W^{s,p}(\R^n)}=1$, Lemma~\ref{Correction_term_estimate} shows that this term goes to zero as $N\to\infty$.

			\item[(ii)] For $1<p<2$: We obtain the same estimate from
			\[
			\begin{split}
				&\int_{\R^{2n}}\left(|d_s u_N|+\left|d_s \Phi_N\right|\right)^{p-2}\left|d_s u_N-d_s \Phi_N\right|\left|d_s \Phi_N\right| \frac{dxdy}{|x-y|^n} \\
				\leq &\int_{\R^{2n}}\left|d_s u_N-d_s \Phi_N\right|^{p-1}\left|d_s \Phi_N\right|\frac{dxdy}{|x-y|^n} \\
				\lesssim &\underbrace{\left\|d_s (u_N -  \Phi_N)\right\|_{L^p(\R^n,|x-y|^{-n})}^{p-1}\left\|d_s \Phi_N\right\|_{L^p(\R^n,|x-y|^{-n})}}_{\text{The H\"older's inequality.}}\\
				=&[u_n-\Phi_N]^{p-1}_{W^{s,p}(\R^n)}[\Phi_N]_{W^{s,p}(\R^n)}.
			\end{split}
			\]
			Here we used that $p-2<0$ implies that 
			$$
			\left(|d_s u_N|+\left|d_s \Phi_N\right|\right)^{p-2}\leq |d_s u_N-d_s\Phi_N|^{p-2}.
			$$ Arguing as in the case $p\geq 2$, we see that the last expression goes to zero as $N\to\infty$. 
		\end{itemize}
		Hence, we can conclude the proof.
	\end{proof}

	\begin{lemma}
		\label{lem: DN map as energy}
		Let $\Omega \subset \mathbb{R}^n$ be a bounded open set, $0<s<1$, $1< p<\infty$ and $x_0\in W\Subset \Omega_e$. Assume that $(\Phi_N)_{N\in\N}\subset C_c^{\infty}(W)$ is a sequence satisfying the properties \ref{prop 1 of exterior cond} -- \ref{prop 3 of exterior cond} of Lemma~\ref{lem: exterior conditions} and let $\sigma\colon \R^n\times \R^n\to \R$ satisfy the uniform ellipticity condition \eqref{eq: uniform ellipticity cond}. Then there holds 
		\begin{equation}
			\label{eq: limit of exterior conditions}
			\lim_{N\to\infty}\langle \Lambda_{\sigma}\Phi_N,\Phi_N\rangle =\lim_{N\to\infty}E_{s,p,\sigma}(\Phi_N).
		\end{equation}
	\end{lemma}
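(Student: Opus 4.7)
The plan is essentially to read off the conclusion from the identity \eqref{DN_map_representation} combined with Proposition~\ref{Prop_correction_term}, so the proof should be quite short.

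First, I would recall the exact decomposition \eqref{DN_map_representation}, which was derived just before Lemma~\ref{Correction_term_estimate} by writing $u_N = (u_N - \Phi_N) + \Phi_N$ with $u_N - \Phi_N \in \widetilde{W}^{s,p}(\Omega)$, testing the weak formulation of the DN map against $\Phi_N$, and reorganizing. This identity yields the pointwise (in $N$) equality
\[
\langle \Lambda_{\sigma}\Phi_N,\Phi_N\rangle = E_{s,p,\sigma}(\Phi_N) + R_N,
\]
where
\[
R_N := \int_{\R^{2n}} \sigma\left(\left|d_s u_N\right|^{p-2} d_s u_N - |d_s \Phi_N|^{p-2} d_s \Phi_N\right) d_s \Phi_N \, \frac{dxdy}{|x-y|^n}.
\]
Since the sequence $(\Phi_N)_{N\in\N}$ satisfies the hypotheses of Proposition~\ref{Prop_correction_term} (namely \ref{prop 1 of exterior cond}--\ref{prop 3 of exterior cond} of Lemma~\ref{lem: exterior conditions} and the uniform ellipticity \eqref{eq: uniform ellipticity cond}), the proposition applies and gives $R_N \to 0$ as $N \to \infty$.

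Passing to the limit in the identity above then yields
\[
\lim_{N\to\infty}\langle \Lambda_{\sigma}\Phi_N,\Phi_N\rangle = \lim_{N\to\infty} E_{s,p,\sigma}(\Phi_N) + \lim_{N\to\infty} R_N = \lim_{N\to\infty} E_{s,p,\sigma}(\Phi_N),
\]
where the existence of $\lim_N E_{s,p,\sigma}(\Phi_N)$ (assumed in the statement) together with $R_N \to 0$ ensures that the limit on the left-hand side also exists and equals the one on the right. There is no genuine obstacle here: all the hard work has already been done in Lemma~\ref{Correction_term_estimate} (the $W^{s,p}$-closeness $\|u_N-\Phi_N\|_{W^{s,p}(\R^n)} \to 0$) and in Proposition~\ref{Prop_correction_term} (where Lemma~\ref{lemma: Auxiliary lemma} is used to convert this closeness into smallness of $R_N$). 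The present lemma is simply the assembly step that will feed into the proof of Theorem~\ref{main theorem}, since combining it with Lemma~\ref{lem: energy concentration property} immediately identifies $\lim_N \langle \Lambda_{\sigma}\Phi_N,\Phi_N\rangle$ with $\sigma(x_0,x_0)$.
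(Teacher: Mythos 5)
Your proposal is correct and follows essentially the same route as the paper: read off the identity \eqref{DN_map_representation}, invoke Proposition~\ref{Prop_correction_term} to conclude the correction term vanishes, and pass to the limit. Your parenthetical remark about the existence of $\lim_N E_{s,p,\sigma}(\Phi_N)$ is a reasonable clarification of what the lemma really asserts (that the difference of the two quantities tends to zero), and the paper treats it equally tersely.
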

	
	\begin{proof}
		As above, denote by $(u_N)_{N\in\N}\subset W^{s,p}(\R^n)$ the unique solutions to \eqref{eq: limit exterior cond and solutions}.
		Now by definition of the DN map $\Lambda_{\sigma}$ and $u_n-\Phi_N\in \widetilde{W}^{s,p}(\Omega)$, there holds
		\[
		\begin{split}
			\langle\Lambda_{\sigma}(\Phi_N),\Phi_N\rangle = E_{s,p,\sigma}(u_N).
		\end{split}
		\]
		Now the result directly follows from \eqref{DN_map_representation} and Proposition \ref{Prop_correction_term}.
	\end{proof}
	
	Finally, we can give the proof of Theorem~\ref{main theorem}.
	
	\begin{proof}[Proof of Theorem~\ref{main theorem}]
		First note that such a sequence $(\Phi_N)_{N\in\N}$ exists by Lemma~\ref{lem: exterior conditions}. By applying Lemma~\ref{lem: energy concentration property} and \ref{lem: DN map as energy}, we deduce
		\[
		\sigma(x_0,x_0)=\lim_{N\to\infty}E_{s,p,\sigma}(\Phi_N)=\lim_{N\to\infty}\langle \Lambda_{\sigma}\Phi_N,\Phi_N\rangle.
		\]
		Hence, we can conclude the proof.
	\end{proof}
	
	\section{Proofs of Proposition~\ref{prop: exterior determination}, \ref{prop: exterior stability} and \ref{prop: uniqueness}}
	\label{sec: consequences of main thm}
	
	The proofs of Proposition~\ref{prop: exterior determination}, \ref{prop: exterior stability} and \ref{prop: uniqueness} can be regarded as corollaries of Theorem \ref{main theorem}, and we give their proofs in the end of this work.
	
	\begin{proof}[Proof of Proposition~\ref{prop: exterior determination}]
		Fix some $x_0\in W$, we can choose a neighborhood $V$ of $x_0$ such that $V\Subset \Omega_e$ and $V\subset W$. Then by Theorem~\ref{main theorem} (with $V$ in place of $W$), we deduce
		\[
		\Sigma_j(x_0)=\sigma_j(x_0,x_0)=\lim_{N\to\infty}\langle \Lambda_{\sigma_j}\Phi_N,\Phi_N\rangle
		\]
		for $j=1,2$. Since, the DN maps of $\sigma_1$ and $\sigma_2$ coincide for smooth functions compactly supported in $W$, we get $\Sigma_1(x_0)=\Sigma_2(x_0)$. As $x_0\in W$ is arbitrary, we get $\Sigma_1=\Sigma_2$ in $W$.
	\end{proof}
	
	\begin{proof}[Proof of Proposition~\ref{prop: exterior stability}]
		Let $x_0\in W$ and choose as above a neighborhood $V\Subset \Omega_e$ such that $x_0\in V\subset W$. Then by Theorem~\ref{main theorem} (with $V$ in place of $W$), we have
		\[
		\begin{split}
			&|\Sigma_1(x_0)-\Sigma_2(x_0)|\\
			=&\lim_{N\to\infty}\left|\langle (\Lambda_{\sigma_1}-\Lambda_{\sigma_2}\Phi_N,\Phi_N\rangle\right|\\
			\leq &\limsup_{N\to\infty}\|\Lambda_{\sigma_1}-\Lambda_{\sigma_2}\|_{\widetilde{W}^{s,p}(V)\to (\widetilde{W}^{s,p}(V))^*}\|\Phi_N\|_{W^{s,p}(\R^n)}\|\Phi_N\|_{W^{s,p}(\R^n)}\\
			\leq &\|\Lambda_{\sigma_1}-\Lambda_{\sigma_2}\|_{\widetilde{W}^{s,p}(W)\to (\widetilde{W}^{s,p}(W))^*}.
		\end{split}
		\]
		In the last step, we used that by Theorem~\ref{main theorem} the sequence $(\Phi_N)_{N\in\N}\subset C_c^{\infty}(V)$ satisfies $\|\Phi_N\|_{L^p(\R^n)}\to 0$ as $N\to\infty$ and $[\Phi_N]_{W^{s,p}(\R^N)}=1$ for all $N\in\N$. Since, $x_0\in W$ was arbitrary and the right hand side is independent of $x_0$, we deduce
		\[
		\|\Sigma_1-\Sigma_2\|_{L^{\infty}(W)}\leq \|\Lambda_{\sigma_1}-\Lambda_{\sigma_2}\|_{\widetilde{W}^{s,p}(W)\to (\widetilde{W}^{s,p}(W))^*}
		\]
		and we can conclude the proof.
	\end{proof}
	
	\begin{proof}[Proof of Proposition~\ref{prop: uniqueness}]
		Proposition~\ref{prop: exterior determination} implies $\Sigma_1=\Sigma_2$ on the nonempty open set $W\subset\Omega_e$. With the real-analyticity of $\Sigma_j$ at hand, $j=1,2$, this immediately implies $\Sigma_1=\Sigma_2$ in $\R^n$.
	\end{proof}

	\bibliography{refs} 
	
	\bibliographystyle{alpha}
	
\end{document}